\documentclass[11pt]{article}
\usepackage{enumerate}
\usepackage{url}
\usepackage{color}
\usepackage[textwidth=0.75\paperwidth,textheight=0.75\paperheight,centering]{geometry}
\usepackage{graphicx}
\usepackage{float}
\usepackage{amsmath}
\usepackage{amsfonts}
\usepackage{amssymb}
\usepackage{amsthm}
\usepackage{ulem}

\setcounter{MaxMatrixCols}{10}

\theoremstyle{plain}
\newtheorem{theorem}{Theorem}
\newtheorem{corollary}[theorem]{Corollary}
\newtheorem{proposition}[theorem]{Proposition}
\newtheorem{lemma}[theorem]{Lemma}
\theoremstyle{definition}

\newtheorem{example}[theorem]{Example}
\newtheorem{remark}[theorem]{Remark}

\usepackage{natbib}
\setcitestyle{comma}

\begin{document}

\title{New Exponential Dispersion Models for Count Data: The ABM 
and LM Classes}
\author{Shaul K. Bar-Lev\thanks{%
Faculty of Technology Management,\ Holon Institute of Technology, Holon,
Israel; email: shaulb@hit.ac.il } \and Ad Ridder\thanks{%
School of Business and Economics, Vrije University of
Amsterdam, Amsterdam, The Netherlands; email: ad.ridder@vu.nl }}
\maketitle

\begin{abstract}
In their fundamental paper on cubic variance functions (VFs), Letac and Mora
(\textit{The Annals of Statistics},1990) presented a systematic, rigorous
and comprehensive study of natural exponential families (NEFs) on the real
line, their characterization through their VFs and mean value
parameterization. They presented a section that for some reason has been
left unnoticed. This section deals with the construction of VFs associated
with NEFs of counting distributions on the set of nonnegative integers and
allows to find the corresponding generating measures. As EDMs are based on
NEFs, we introduce in this paper two 
new classes of EDMs based on their results. 
For these classes, which are associated with simple VFs, we derive
their mean value parameterization and their associated generating measures.
We also prove that they have some desirable properties. 
Both classes are 
shown to be overdispersed and zero inflated in ascending order,
making them as competitive statistical models for those in use in both,
statistical and actuarial modeling. To our best knowledge, 
the 
classes of counting distributions we present in this paper, have not been
introduced or discussed  before in the literature. 
To show that our classes can serve as
competitive statistical models for those in use (e.g., Poisson, Negative
binomial),
we include 
a numerical example of real data. 
In this example, we  compare the performance of 
our classes with relevant competitive models. 

\bigskip\noindent
\textit{Keywords}. Exponential dispersion model; natural exponential family;
overdispersion; variance function; zero-inflated distribution
\end{abstract}

\section{Introduction and Background}\label{s:intro}
Natural exponential families (NEFs) and exponential dispersion models
(EDMs)\ on $\mathbb{R}$ play an important role both in probability and
statistical applications. Most of the frequently used distributions are
indeed belonging to such models. However, a huge number of NEFs (or EDMs)
have not been used in probabilistic or statistical modelling for two main
reasons: they have not been revealed or do not have explicit functional
forms (even not via power series expansions). This, despite the fact that
they could have provided significant and new models useful in statistical
applications. Indeed, the main purpose of this paper is to expose the
statistical research community to various classes of such NEFs. A thorough
discussion on this observation is presented in \citet{barlev2017}.

One of the most forsaken reference representing the above situation is the
fundamental paper \citet{letac1990} 
on NEFs
which provides a thorough description and analytic properties of such
families along with their mean value parameterization. In spite of the fact
that their article received many citations, a major and important part of
the article was somehow abandoned without being noticed. This part refers to
the section dealing with the construction of NEFs of counting distributions
on the set of nonnegative integers $\mathbb{N}_{0}$. These families are
represented by either polynomial variance functions (VFs) or other nice
forms. Moreover, in their Proposition 4.4 they explicitly present a formula
that allows to compute, at least numerically, the counting measure $\mu $
which generates the appropriate NEF in terms of its mean $m$ 
\citep[for further details see also][]{barlev2017}.
Such a formula requires
(except for  a few limited special cases) 
some rather cumbersome numerical
calculations of the $n$-th derivative of product of functions depending on
the mean $m$ which are needed for calculating the 
mass of $\mu $ at the point $n$.

In our opinion, one of the reasons why this formula as well as Proposition
4.4 of \citet{letac1990}, 
were not used is that in the eighties and nineties of the
last century (when the \citet{letac1990}
article was just published) is related to
the fact that there were no powerful mathematical programs that would allow
the complex and cumbersome calculations of the mass of $\mu $ on the
nonnegative integers. Fortunately, nowadays, the situation has changed and
existing powerful computing software are available and might be used to
calculate. However, despite the nowadays availability of existing powerful
software, it is still intricate or even not possible to carry out the
probability calculation of the relevant NEFs in their general settings. It
is therefore necessary to locate special cases of NEFs complying Proposition
4.4 of \citet{letac1990} for which the software application is possible. And
indeed, our aim in this paper is to achieve this goal and introduce 
two 
classes of NEFs, which through further mathematical improvements, allow
the calculation of  the appropriate count probabilities 
of these subclasses of NEFs. 
We need to point out here that locating such classes is not as simple
as it seems, and requires great care and thoughts in choosing them. To our
best knowledge,   the  classes 
of counting distributions we present in
this paper have not been introduced or discussed 
 before  in the literature. 
A fact that will lead to exposure of numerous counting NEFs (as
well as EDMs) that can serve as competitive statistical models for those in
use (e.g., Poisson, Negative binomial) in both, statistical and actuarial
modeling.

For this we need to present some preliminaries. As is well known, and as
will seen in the sequel, EDMs are based on NEFs. Hence, we first need to
present some basic properties of NEFs   and  VFs, 
mean value parameterization, and then EDMs.

Let $\mu $ be a positive Radon measure on $\mathbb{R}$ with convex support 
$C_{\mu }$. Consider the set
\begin{equation}
D_{\mu }\doteq \left\{ \theta \in \mathbb{R} : 
L_{\mu }(\theta )\doteq
\int\nolimits_{\mathbb{R}}\exp(\theta x)\mu (dx)
<\infty \right\} ,  \label{1}
\end{equation}
and assume that $\Theta_{\mu}\doteq \mathrm{int}\, D_{\mu}$ is nonempty. 
Then, the NEF $\mathcal{F}(\mu)$ 
generated by $\mu $ is defined by the set of probability
 distributions 
\begin{equation} 
\mathcal{F}(\mu) \doteq 
\Big\{ F\big(\theta ,\mu(dx)\big)=
\exp\big( \theta x-k_{\mu }(\theta) \big)
\mu (dx) : \theta \in\Theta_\mu \Big\} ,  \label{3}
\end{equation}
where $k_{\mu }(\theta) \doteq \log L_{\mu }(\theta )$ is the
cumulant transform of $\mu$; $k_{\mu }$ is strictly convex and real
analytic on $\Theta_\mu $.  Moreover,  
$k_{\mu}^{\prime }(\theta )$ and 
$k_{\mu}^{\prime \prime}(\theta )$, $\theta \in \Theta_\mu $, 
are the respective mean and variance
corresponding to $F(\theta ,\mu )$, and the open interval 
$M_\mu$  $\doteq k_{\mu}^{\prime}(\Theta_\mu)$ 
is called the mean 
domain of $\mathcal{F}(\mu)$. 

An important observation is that measure $\mu$ is not unique
for $\mathcal{F}(\mu)$. Let $\mathcal{M}$ be all Radon measures $\nu$
on $\mathbb{R}$ for which $L_\nu(\theta)<\infty$ on domain $\Theta_\nu$.
Consider two measures $\mu, \mu^*\in\mathcal{M}$, and suppose that
$\mu^*$ is an exponential shift of $\mu$; i.e., 
$\mu^*(dx)=e^{a+bx}\mu(dx)$ for some real $a,b$.
Then a simple calculation shows that 
$\mathcal{F}(\mu)=\mathcal{F}(\mu^*)$. This holds
also reversely, if $\mathcal{F}(\mu)=\mathcal{F}(\mu^*)$
for two measures $\mu, \mu^*\in\mathcal{M}$,
then one is an exponential shift of the other.
Consequently, we may denote the NEF by $\mathcal{F}=\mathcal{F}(\mu)$
and its the mean domain $M=M_\mu$ to stress that
these do not depend on $\mu$.

Since the 
function $k_\mu^{\prime}: \Theta_\mu\to M$ 
is one-to-one, its
inverse function
$\big(k_\mu^{\prime}\big)^{-1} $
$: M\to \Theta_\mu$ is well defined. 
When we compute the variance $V_\mu(\theta)\doteq k_\mu^{\prime\prime}(\theta)$
of the distribution $F(\theta,\mu)$ as a function of the mean $m\in M$, i.e.,
\begin{equation}
V_\mu(m) = 
k_\mu^{\prime \prime}\big((k_\mu^{\prime})^{-1}(m)\big),
\label{S2.2}
\end{equation}
it also does not depend on $\mu$, and we denote it by $V(m)$.
The map $m\mapsto V(m)$ from $M$ into $\mathbb{R}^{+}$ is called the
variance function (VF) of $\mathcal{F}$. In fact, a VF of an 
NEF $\mathcal{F}$ is a pair 
$(V, M)$ which uniquely determines the NEF within the class of NEFs 
\citep[see][]{morris1982,letac1990}.
It is important to emphasize that a VF is a transform,
not of a particular distribution, but rather of a family $\mathcal{F}$ 
in the sense that if two VFs $(V_{1}, M_{1})$ and $(V_{2}, M_{2})$ of two
NEFs $\mathcal{F}_{1}$ and $\mathcal{F}_{2}$, respectively, satisfy 
$V_{1}=V_{2}$ on 
$M_{1}\cap M_{2}\neq \emptyset $, then 
$\mathcal{F}_{1}=\mathcal{F}_{2}$. 
This would imply that given a VF $(V,M)$,  the mean
domain $M$ is the largest open interval on which $V$ is positive real
analytic.

\medskip\noindent
Suppose that we would denote the function
$\big(k_\mu^{\prime}\big)^{-1}(\cdot)$ by $\psi_\mu(\cdot)$.
Then we get by differentiating
\begin{equation*}
\psi_\mu^{\prime}(m) = \frac{1}%
{k_\mu^{\prime\prime}\big((k_\mu^{\prime})^{-1}(m)\big)}
=\frac{1}{V(m)}.
\end{equation*}
Similarly, when we would define
$\phi_\mu(\cdot)\doteq k_\mu\big((k_\mu^{\prime})^{-1}(\cdot)\big)\big)$,
we get by differentiating
\begin{equation*}
\phi_\mu^{\prime}(m)=\frac{m}{V(m)}.
\end{equation*}
Again, we find these derivatives to be not dependent on the specific
distribution, but only on the VF. 
Remarkebly, then their antiderivatives do not depend either on $\mu$.
Thus, rather defining the functions 
$\psi_\mu(\cdot)$ and $\phi_\mu(\cdot)$ as above for specific 
measure $\mu$, we define these as 
functions on the mean domain $M$ of NEF $\mathcal{F}$
as primitives of $1/V(m)$ and $m/V(m)$, respectively, i.e.,
\begin{equation}\label{c}
\psi(m)\doteq \int \frac{dm}{V(m)},  
\end{equation}
and
\begin{equation} \label{d}
\phi(m)\doteq \int \frac{mdm}{V(m)}.
\end{equation}
As a result, consider that a VF $(V, M)$ 
of an NEF $\mathcal{F}=\mathcal{F}(\mu)$ is given,
and suppose that we 
choose any two
primitives $\psi(m)$ and $\phi(m)$
of $1/V(m)$\ and $m/V(m)$, respectively.
Then there exists a positive Radon measure $\mu^*\in\mathcal{M}$ 
such that 
\begin{equation}\label{4}
\phi(m)=\log 
\int_{\mathbb{R}}\exp (\psi(m)x)\mu^*(dx),
\quad m\in M,  
\end{equation}
and
\begin{equation}\label{5}
\mathcal{F}=\mathcal{F}(\mu^*)=
\Big\{ F\big(m,\mu^*(dx)\big)=
\exp \big( x\psi(m)-\phi(m)\big) \mu^*(dx) : m\in M\Big\}.  
\end{equation}
The reparameterization of $\mathcal{F}$ in (\ref{5}) is called the mean
value parameterization of $\mathcal{F}$ 
\citep[see][Proposition 2.3]{letac1990}.
Accordingly, an NEF has two natural presentations: one is parameterized by
canonical parameter $\theta $ and is given in (\ref{3}) and the second by
the mean parameter $m$, and is given in (\ref{5}). However, as far as
statistical applications concern, the rather more important presentation is
the mean value parameterization (as $\theta $ is just an artificial
parameter - the argument of the corresponding Laplace transform).

\medskip\noindent
We now present the definitions of steep NEFs and EDMs:

\begin{itemize}
\item \textbf{Steep NEFs}: 
An NEF $\mathcal{F}(\mu)$ is called 
steep $\Leftrightarrow $ its cumulant transform 
$k_{\mu}(\theta )$ is essentially smooth convex function on 
$D_{\mu }$ (defined in (\ref{1})) 
$\Leftrightarrow M=\mathrm{int}\, C_{\mu }$ 
\citep[c.f.,][]{barndorff1978,letac1990}.
We shall refer to this definition in the
sequel.

\item \textbf{EDMs}: 
Let $\mathcal{F}=\mathcal{F}(\mu )$ be an NEF generated
by $\mu$ with  
Laplace and cumulant transforms $L_{\mu }$ and 
$k_{\mu }$, respectively. Denote
\begin{equation*}
\Lambda =\left\{ p\in \mathbb{R}^{+} : L_{\mu }^{p}
\text{ is a Laplace transform of some measure }
\mu_{p}\right\} ,
\end{equation*}
then $\Lambda $ is nonempty due to convolution, 
and it is called the Jorgensen
set (or the dispersion parameter space in the terminology of EDMs). 
It has been shown that $\Lambda =\mathbb{R}^{+}$ iff $\mu $ 
(and thus all members of $\mathcal{F}(\mu)$) is infinitely divisible. 
If $p\in \Lambda $, 
 the cumulant function of $\mu_p$ is
\[
k_{\mu_p}(\theta)=\log L_\mu^p(\theta) = pk_\mu(\theta).
\]

\noindent
Hence, the NEF
generated by $\mu_{p}$ is the set of probability distributions
\begin{equation}
\mathcal{F}_p=\mathcal{F}(\mu_p)\doteq
\Big\{ F\big(\theta ,\mu_{p}(dx)\big)
=\exp\big( \theta x-pk_{\mu}(\theta) \big)\mu_{p}(dx) :
\theta \in \Theta _{\mu_{p}}=\Theta_{\mu }\Big\}.  \label{Jor1}
\end{equation}
Furthermore, the mean parameterization goes similarly as above.
Denote the VF of $\mathcal{F}_p$ by $(V_p, M_p)$, 
and denote primitives of
$1/V_p(m)$ and $m/V_p(m)$ by $\psi_p(m)$ and $\phi_p(m)$,
respectively. Then, there is a positive Radon measure $\mu_p^*$
such that
\begin{equation*}
\mathcal{F}_p =\mathcal{F}(\mu_p^*)=
\Big\{ F\big(m ,\mu^*_{p}(dx)\big)
=\exp\big( x\psi_p(m)-\phi_p(m)\big)\mu_{p}^*(dx) :
m \in M_p\Big\}.
\end{equation*} 
Its VF $(V_p, M_p)$ satisfies
\begin{equation}\label{Jor2}
V_p(m)
=k_{\mu_p}^{\prime\prime}\big((k_{\mu_p}^{\prime})^{-1}(m)\big)
=pk_{\mu}^{\prime\prime}\big((k_{\mu}^{\prime})^{-1}(m/p)\big)
=pV(m/p),
\end{equation}
and $M_p=pM_\mu$.
Hence, the primitive $\psi_p(m)$ is obtained by
\begin{equation}\label{e:psip}
\psi_p(m)=\int\frac{dm}{V_p(m)}=
\int\frac{dm}{pV(m/p)}=\int\frac{d(m/p)}{V(m/p)}
=\psi(m/p).
\end{equation}
Similarly for the primitive $\phi_p(m)$ we get
\begin{equation}\label{e:phip}
\phi_p(m)=\int\frac{mdm}{V_p(m)}=
\int\frac{mdm}{pV(m/p)}=p\int\frac{(m/p)d(m/p)}{V(m/p)}
=p\phi(m/p).
\end{equation}
In this way, the mean parameterization of the NEF
$\mathcal{F}_p$ becomes
\begin{equation}\label{e:meanparmp}
\mathcal{F}_p =
\Big\{ F\big(m ,\mu^*_{p}(dx)\big)
=\exp\big( x\psi(m/p)-p\phi(m/p)\big)\mu_{p}^*(dx) :
m \in pM\Big\}.
\end{equation}

\medskip\noindent
The set of NEFs 
\begin{equation*}
\cup _{p\in \Lambda }\,\mathcal{F}_p
\end{equation*}%
was termed by \citet{jorgensen1987} the EDM corresponding to $\mu$. In
particular if $\Lambda =\mathbb{R}^{+}$ (i.e., $\mu $ is infinitely
divisible) then EDMs are used to describe the error component in generalized
linear models. 
\end{itemize}

\medskip\noindent
Many  types of VFs of NEFs have been presented and discussed in the
literature \citep[for a thorough survey see][]{barlev2017}.
Related to our study are VF's having a polynomial structure,
for which all of the respective cumulants and moments are
also polynomials. In quite generality, \citet{barlev1987}
and  \citet[Theorem 3.2]{letac1990} showed that
any $r$-th degree polynomial of the form
\begin{equation}
V(m) = \sum\limits_{i=1}^{r}a_{i}m^{i},
\quad m\in \mathbb{R}^{+},\, r\in \mathbb{N},
\label{b}
\end{equation}%
where $a_{i}\geq 0, i=1,\ldots,r$, and $\sum_{i=1}^{r}a_{i}>0$,
is a VF of an infinitely divisible NEF. 
Special cases are quadratic VFs \citep{morris1982},
the six strictly cubic VFs \citep{letac1990},
the Tweedie class having VFs of the
form $V(m)=\alpha m^\gamma$
\citep{tweedie1984,barlev1986,jorgensen1987,jorgensen1997},
the Hinde-Dem\'etrio class $V(m)=m+m^\gamma$
\citep{hinde1998,kokonendji2004b,kokonendji2007},
Poisson-Tweedie class with VF
$V(m)=m + \alpha m^\gamma$
\citep{kokonendji2004b,jorgensen2016}, and
Poisson-exponential-Tweedie models which have the VF
$V(m) = m+m^2+\alpha m^\gamma$ \citep{abid2020}.

\medskip\noindent
Recall that for a given VF $(V,M)$, 
$\psi(m)$ and $\phi(m)$ are primitives
of $1/V(m)$ and $m/V(m)$,  see
\eqref{c} and \eqref{d},  respectively.
Accordingly, if $V$ is of the general form (\ref{b}) it is not possible to
explicitly express $\psi(m)$ and $\phi(m)$, in which case the mean
value parameterization \eqref{e:meanparmp} 
is useless for any practical consideration.
If, however, for some special cases of the $a_{i}$'s coefficients, it can be
calculated nicely and explicitly then so can be the corresponding likelihood
function based on an appropriate random sample. This fact has a tremendous
significance in statistical inference. 

\bigskip\noindent
After this long introduction we arrive at the crux of the paper. 
\citet{letac1990} proved a proposition that
(a) characterizes the VF $(V,M)$ of NEFs that are generated 
by counting measures on $\mathbb{N}_0$, and (b)
gives an expression for the generating measures of such
NEFs. 
For our study, part (b) is relevant, and therefore we state it
below. 
 
\begin{proposition}[Part (b) of Proposition 4.4 in 
\citet{letac1990}] \label{p:prop4.4}
Let $\mathcal{F}=\mathcal{F}(\mu)$ be an NEF on $\mathbb{N}_0$
with VF $(V,M)$,  and with $\mu_{0}>0$ and $\mu_{1}>0$. 
Choose any two primitives $\psi(m)$ and $\phi(m)$
of $1/V(m)$ and $m/V(m)$, respectively, that satisfy

\begin{equation}
\lim_{m\to 0} \phi^{\prime}(m)=1,  \label{cond1}
\end{equation}
and
\begin{equation}
\lim_{m\to 0} m\exp\big(-\psi(m)\big) = 1.  \label{cond2}
\end{equation}
Define, 
\[
G(m) \doteq m\exp\big(-\psi(m)\big),\quad m\in M.
\]
Then the NEF $\mathcal{F}$ is generated by the
measure $\mu^*\in \mathcal{M}$ whose entries are computed by

\begin{equation}\label{LM1}
\begin{cases}
\mu^*_{0}=\exp\big(\phi(m)\big)\big|_{m=0},\\ 
\mu^*_{n}=\frac{1}{n!}
\big(\frac{d}{dm}\big)^{n-1}
\Big(\big(\exp (\phi(m))\big)\times
\phi^{\prime}(m)\times \big(G(m)\big)^{n}\Big)\Big|_{m=0},
\quad n=1,2,\ldots.
\end{cases}
\end{equation}
\end{proposition}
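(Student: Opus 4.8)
The plan is to identify the generating measure $\mu^*$ supplied by the mean value parameterization and then read off its masses as the coefficients of an explicit power series, using Lagrange--B\"urmann inversion. Concretely, applying \eqref{4}--\eqref{5} to the chosen primitives $\psi$ and $\phi$ produces a measure $\mu^*\in\mathcal{M}$ with $\exp(\phi(m))=L_{\mu^*}(\psi(m))$ for $m\in M$. Since $\mathcal{F}$ is carried by $\mathbb{N}_0$ and $\mu^*$ is an exponential shift of the original generating measure, $\mu^*$ is again supported on $\mathbb{N}_0$, so $L_{\mu^*}(\theta)=\sum_{n\ge 0}\mu^*_n e^{n\theta}$. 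Writing $z=\exp(\psi(m))$ turns the previous identity into the power series relation $\exp(\phi(m))=\sum_{n\ge 0}\mu^*_n z^n$, whence $\mu^*_n=[z^n]\exp(\phi(m))$ is exactly the quantity to be computed.

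First I would pin down the behavior at the left endpoint $m=0$ of $M$ encoded by the two normalizations. Condition \eqref{cond1}, i.e.\ $\phi'(m)=m/V(m)\to 1$, forces $V(m)\sim m$ as $m\downarrow 0$, so $\phi$ extends analytically to $0$ and $\exp(\phi(m))$ is analytic there with constant term $\exp(\phi(0))$; this already yields the first line of \eqref{LM1}, namely $\mu^*_0=\exp(\phi(0))$. Condition \eqref{cond2} is exactly the statement that $G(m)=m\exp(-\psi(m))$ extends analytically to $m=0$ with $G(0)=1$. Because $z=\exp(\psi(m))=m/G(m)$, the new variable $z$ is then an analytic function of $m$ with a simple zero at the origin, and the relation $m=z\,G(m)$ places us precisely in the Lagrange--B\"urmann framework with kernel $\Phi=G$ satisfying $\Phi(0)=1\neq 0$.

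The key step is to apply the Lagrange--B\"urmann inversion formula to $H(m)\doteq\exp(\phi(m))$. Since $H'(m)=\exp(\phi(m))\phi'(m)$, the formula gives for every $n\ge 1$
\[
\mu^*_n=[z^n]H=\frac{1}{n}[m^{n-1}]\Big(H'(m)\,G(m)^n\Big)=\frac{1}{n!}\Big(\frac{d}{dm}\Big)^{n-1}\Big(\exp(\phi(m))\,\phi'(m)\,G(m)^n\Big)\Big|_{m=0},
\]
which is exactly the second line of \eqref{LM1}. Together with the value of $\mu^*_0$ found above, this establishes the proposition.

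The hard part will not be the algebra but the analytic bookkeeping at the boundary point $m=0$: one must verify that $0$ is indeed the left endpoint of $M$ and that both $G$ and $\exp(\phi)$ genuinely extend analytically across it, so that Lagrange--B\"urmann applies and the coefficients it produces are the true masses of $\mu^*$. This is where the hypotheses $\mu_0>0$ and $\mu_1>0$ enter: they guarantee that as the canonical parameter tends to $-\infty$ the mean tends to $0$ with $V(m)/m\to 1$, which underlies the leading-order behavior invoked above. A cleaner alternative, avoiding all convergence questions, is to run the entire argument with formal power series in $z$ --- the existence of $\mu^*$ being already guaranteed by the cited mean value parameterization --- since Lagrange--B\"urmann is then a purely algebraic identity among the Taylor coefficients of $G$ and $\exp(\phi)$ at the origin.
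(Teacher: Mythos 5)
Your argument is correct. The paper itself states this proposition without proof (it is quoted from Letac and Mora, 1990), but your Lagrange--B\"urmann derivation of $\mu^*_n=[z^n]\exp\big(\phi(m)\big)$ from the relation $m=z\,G(m)$ with $z=e^{\psi(m)}$, $G(0)=1$, together with the observation that conditions \eqref{cond1}--\eqref{cond2} are exactly the normalizations making $0$ the left endpoint of $M$ and $G$, $\exp(\phi)$ analytic there, is precisely the mechanism behind the original proof --- the same one the paper invokes in Appendix \ref{a:lmmisc} (``based on using the Lagrange formula'', applied there to $m=we^{P(m)}$ for the LM class).
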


\noindent
Condition \eqref{cond1} is necessary for the NEF to be concentrated
on the nonnegative integers, however, it leaves infinitely many
choices for the function $\phi$ which are all equal upto an additive
constant. For our purposes it is most convenient to impose 
the condition
\begin{equation}\label{con3}
\lim_{m\to 0}\phi(m) = 0.
\end{equation}
This condition gives $\mu^*_0=1$, which is convenient in our analysis
of the zero-inflation properties that we discuss in the sequel.
Secondly, we remark that condition \eqref{cond2}
could be relaxed. We state this as a lemma.

\begin{lemma}\label{l:prop4.4}
Proposition \ref{p:prop4.4} holds for \eqref{cond2}
relaxed to
\[
\lim_{m\to 0} m\exp\big(-\psi(m)\big) = c,
\]
where $c>0$ is any positive constant.
\end{lemma}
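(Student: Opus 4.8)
The plan is to exploit two facts already at hand: that a primitive $\psi$ of $1/V$ is determined only up to an additive constant, and that NEFs are invariant under exponential shifts of their generating measure (the excerpt establishes that if $\mu^*(dx)=e^{a+bx}\mu(dx)$ then $\mathcal{F}(\mu^*)=\mathcal{F}(\mu)$). The relaxed normalization $\lim_{m\to 0} m\exp(-\psi(m))=c$ differs from the original \eqref{cond2} exactly by such an additive constant in $\psi$, and I expect this to translate into a clean exponential shift of the measure produced by \eqref{LM1}, leaving the NEF unchanged.

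Concretely, I would first set $\tilde{\psi}(m)\doteq \psi(m)+\log c$. Since $\tilde{\psi}$ differs from $\psi$ by a constant, it is again a primitive of $1/V$, and a one-line computation gives $\lim_{m\to 0} m\exp(-\tilde{\psi}(m)) = e^{-\log c}\lim_{m\to 0} m\exp(-\psi(m)) = c/c = 1$, so $\tilde{\psi}$ satisfies the original hypothesis \eqref{cond2}. The function $\phi$ is untouched, and condition \eqref{cond1} concerns $\phi$ alone, hence remains valid. Thus Proposition \ref{p:prop4.4} applies directly to the pair $(\tilde{\psi},\phi)$ and yields a measure $\tilde{\mu}$ that generates $\mathcal{F}$.

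Next I would compare $\tilde{\mu}$ with the measure $\mu^*$ defined by the relaxed formula, i.e. by \eqref{LM1} using $G(m)=m\exp(-\psi(m))$. Writing $\tilde{G}(m)=m\exp(-\tilde{\psi}(m))=c^{-1}G(m)$, the factor $(\tilde{G})^n=c^{-n}G^n$ is the only place where the two constructions differ, since $e^{\phi}$, $\phi'$, and the $n=0$ term $e^{\phi(m)}\big|_{m=0}$ are common to both. Hence $\tilde{\mu}_n=c^{-n}\mu^*_n$ for all $n\geq 0$, the case $n=0$ being trivial because $c^0=1$. Equivalently $\mu^*_n=e^{(\log c)n}\tilde{\mu}_n$, which identifies $\mu^*$ as the exponential shift $\mu^*(dx)=e^{(\log c)x}\tilde{\mu}(dx)$ of $\tilde{\mu}$.

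Finally, invoking exponential-shift invariance gives $\mathcal{F}(\mu^*)=\mathcal{F}(\tilde{\mu})=\mathcal{F}$, which is precisely the conclusion of Proposition \ref{p:prop4.4} under the relaxed condition. I do not anticipate a genuine obstacle; the only point needing care is to track the additive constant $\log c$ consistently so that the induced change is recognized as a pure exponential shift in $x$ (with no accidental perturbation of $\phi$ or of $\mu^*_0$), after which the earlier invariance result closes the argument at once.
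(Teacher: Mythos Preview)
Your argument is correct and follows essentially the same route as the paper: both observe that the relaxed primitive differs from the one satisfying \eqref{cond2} by the additive constant $\log c$, and that this change produces an exponential shift of the measure in \eqref{LM1}, so the generated NEF is unchanged. Your version is simply more explicit, carrying out the computation $\tilde{\mu}_n=c^{-n}\mu^*_n$ that the paper leaves implicit.
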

\begin{proof}
Denote the $\psi$-primitive that satisfies this condition by $\psi_c(m)$.
Clearly, $\psi_c(m) = \psi_1(m) - \log c$, where $\psi_1(m)$
is the primitive of Proposition \ref{p:prop4.4}.
When we implement the computations \eqref{LM1} with 
$\psi_c$ in stead of $\psi_1$, the resulting measure $\widetilde{\mu}$
is an exponential shift of the $\mu^*$ of \eqref{LM1}.
\end{proof}

\noindent
Returning to the concept of EDM, it is now obvious that
the generating measures $\mu_p^*, p\in\Lambda$
are computed similarly as in Proposition \ref{p:prop4.4}.

\begin{corollary}\label{c:prop4.4}
Let $\{\mathcal{F}_p, p\in\Lambda\}$ be the EDM originating
from the NEF $\mathcal{F}=\mathcal{F}(\mu)$ of Proposition \ref{p:prop4.4}. 
For any $p\in\Lambda$,
a generating measure $\mu_p^*$ is obtained by
the computations \eqref{LM1} in which the primitives
are $\psi_p(m)=\psi(m/p)$, and $\phi_p(m)=p\phi(m/p)$.
\end{corollary}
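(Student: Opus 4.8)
The plan is to apply Proposition \ref{p:prop4.4}, in the relaxed form given by Lemma \ref{l:prop4.4}, directly to the family $\mathcal{F}_p$, using the primitives $\psi_p$ and $\phi_p$ stated in the corollary. Everything then reduces to two routine checks plus one genuinely substantive point, namely that $\mathcal{F}_p$ itself satisfies the hypotheses of Proposition \ref{p:prop4.4}: that it is an NEF \emph{on $\mathbb{N}_0$} with $(\mu_p)_0>0$ and $(\mu_p)_1>0$. I would verify the support claim by writing $g(z)=\sum_{n\ge 0}\mu_n z^n$ with $z=e^\theta$, so that the Laplace transform of $\mu_p$ is $L_\mu^p(\theta)=g(e^\theta)^p$. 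This is $2\pi i$-periodic in $\theta$, which forces any generating measure of $\mathcal{F}_p$ to be concentrated on $\mathbb{Z}$; since the support of $\mu$ lies in $\mathbb{N}_0$, so does that of $\mu_p$. Expanding $g(z)^p=\mu_0^p\big(1+p(\mu_1/\mu_0)z+\cdots\big)$ then gives $(\mu_p)_0=\mu_0^p>0$ and $(\mu_p)_1=p\,\mu_0^{p-1}\mu_1>0$ from $\mu_0,\mu_1>0$ and $p>0$, so the hypotheses hold.

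Next I would simply collect the ingredients already assembled in the EDM discussion: by \eqref{Jor2} the VF of $\mathcal{F}_p$ is $(V_p,M_p)$ with $V_p(m)=pV(m/p)$ and $M_p=pM$, and by \eqref{e:psip}--\eqref{e:phip} the corresponding primitives are $\psi_p(m)=\psi(m/p)$ and $\phi_p(m)=p\phi(m/p)$. These are exactly the functions appearing in the statement, so it remains only to confirm that they meet the limit conditions of Proposition \ref{p:prop4.4}.

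For condition \eqref{cond1}, differentiating gives $\phi_p'(m)=\phi'(m/p)$, hence $\lim_{m\to 0}\phi_p'(m)=\lim_{u\to 0}\phi'(u)=1$. For condition \eqref{cond2}, the substitution $u=m/p$ gives
\[
\lim_{m\to 0} m\exp\big(-\psi_p(m)\big)
=\lim_{u\to 0} pu\exp\big(-\psi(u)\big)=p,
\]
since $u\exp(-\psi(u))\to 1$. The limit is the positive constant $c=p$ rather than $1$, which is precisely the case handled by Lemma \ref{l:prop4.4}; one also inherits $\lim_{m\to 0}\phi_p(m)=p\lim_{u\to 0}\phi(u)=0$, matching \eqref{con3}. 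Invoking Proposition \ref{p:prop4.4} in its relaxed form then yields that the measure produced by the computations \eqref{LM1} with $\psi_p,\phi_p$ in place of $\psi,\phi$ generates $\mathcal{F}_p$, which is the assertion.

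The main obstacle is the support argument in the first paragraph: for non-integer $p\in\Lambda$ it is not a priori clear that $\mathcal{F}_p$ remains a family on $\mathbb{N}_0$, and this is the only place where the structure of the Jorgensen set $\Lambda$ (closure under convolution for integer $p$, infinite divisibility for the full ray $\mathbb{R}^{+}$) really enters. Once that is settled, the remainder is the elementary scaling bookkeeping above.
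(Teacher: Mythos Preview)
Your proof is correct and follows essentially the same line as the paper's: verify the limit conditions \eqref{cond1}--\eqref{cond2} for $\psi_p,\phi_p$ via the scaling $m\mapsto m/p$, observe that the $\psi_p$ limit equals $p$ rather than $1$, and invoke Lemma~\ref{l:prop4.4}. The only notable difference is in establishing that $\mathcal{F}_p$ is an NEF on $\mathbb{N}_0$ with positive first two masses: the paper simply appeals to the characterization in part~(a) of Letac--Mora's Proposition~4.4 (using $M_p=pM$ and $\phi_p'(0)=1$), whereas you give a self-contained argument via periodicity of $L_{\mu_p}(\theta)=g(e^\theta)^p$ and the power-series expansion of $g(z)^p$---your route is a bit more explicit and also covers the positivity of $(\mu_p)_0,(\mu_p)_1$, which the paper leaves implicit.
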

\begin{proof}
Because (i) the mean domain of the NEF $\mathcal{F}_p$ is $M_p=pM$,
and (ii) $\phi_p^{\prime}(0)=\phi^{\prime}(0)=1$,
the NEF $\mathcal{F}_p$ is concentrated on $\mathbb{N}_0$.
Furthermore, 
\[
\lim_{m\to 0} m\exp\big(-\psi_p(m)\big)
=p \lim_{m\to 0} (m/p)\exp\big(-\psi(m/p)\big) = p.
\]
Now apply Lemma \ref{l:prop4.4}.
\end{proof}

\medskip\noindent
Two subclasses of polynomial variance functions (\ref{b}), which satisfy
the conditions that the corresponding NEFs are concentrated on 
the nonnegative integers, are presented in \citet{letac1990}.
These are
\begin{equation}\label{LM}
V(m)=m\prod_{i=1}^{r}\Big( 1+\frac{m}{p_{i}}\Big),
\quad M=\mathbb{R}^{+},
\end{equation}
and
\begin{equation}\label{LM2}
V(m)=\frac{m}{\prod_{i=1}^{r}\left( 1-\frac{m}{p_{i}}\right) }, \quad
M=\big(0,\min (p_{1},\ldots,p_{r})\big),
\end{equation}
where
$p_{i}>0,i=1,2,\ldots$, and  $r\in \mathbb{N}$.

\medskip\noindent 
A few simple cases ($r\leq 2$) of these subclasses have been considered
in the statistical literature, for instance, the VF
\[
V(m) = m\Big( 1+\frac{m}{p}\Big)^2,
\]
which results in the the Abel distribution
\citep[also known as generalized Poisson; c.f.][]{consul1989,consul2006}.

\medskip\noindent
For $r\geq 3$, the corresponding $\mu _{n}$'s 
in \eqref{LM1}, and
thus also the NEF probabilities in (\ref{5}), cannot be presented neither in
closed and explicit forms nor in terms of infinite sum (or some
transcendental functions). They can be derived only through numerical
calculations by either mathematical software as Mathematica,
or Maple
or by writing appropriate computer programs 
in 4th generation languages as Matlab, R and Python.
This explains our statement
above that many NEFs (at least with polynomial VF structure and degree 
$r\geq 3$) have not been used for statistical modeling or applications for
the mere fact that they have not been known before and thus not been
considered and investigated. Therefore in this paper we intend to correct to
a certain extent the `injustice' caused to these discrete NEFs. Notice
however an important point. When we refer to (\ref{5}) in a Bayesian
framework and when (\ref{5}) serves as a prior distribution then the 
$\mu_{n}$'s calculation becomes superfluous and redundant when calculating the
posterior distribution, as one can choose arbitrarily any two primitives 
$\psi(m)$ and $\phi(m)$. We shall further relate to this point in
Section 3.

In particular, we present in the sequel  
two  subclasses: one of the form
(\ref{LM}), and one of the form (\ref{LM2}). 
For convenience we say
classes although they are subsets of (\ref{LM}) and (\ref{LM2}). 
These classes of VFs were chosen because of the relative simplicity 
of the calculations of $\psi(m)$ and $\phi(m)$ for which explicit expressions
are available. The two VF classes are
\begin{align}
V_p(m) &= m\Big(1+\frac{m}{p}\Big)^{r}, \quad M=\mathbb{R}^{+}; \label{ABM1}\\
V_p(m) &= \frac{m}{\big(1-\frac{m}{p}\big)^{r}}, 
\quad M=(0,p),\label{LMclass}
\end{align}
where $p>0$ and $r\in \mathbb{N}_{0}$.
Later we will
coin each class a name and discuss its properties. However, at this point,
we will notice a very important fact. Both classes are of the form 
(\ref{Jor2}) representing VFs of EDMs. Consequently, their corresponding
probabilities belong to the realm of EDMs.

\medskip\noindent
The paper is organized as follows. In Section \ref{s:further} 
we will discuss further
important aspects related to the practical implementation of 
Proposition \ref{p:prop4.4}.
In Section \ref{s:analysis} 
we elaborate the two 
classes presented in 
\eqref{ABM1} and \eqref{LMclass}. 
For each class we derive
expressions for $\psi(m)$ and $\phi(m)$ which fulfills the premises of
Proposition \ref{p:prop4.4}. We then describe some of their properties. In
particular it will be shown that the corresponding NEFs' distributions are
overdispersed and zero-inflated in ascending order in $r$.
A numerical example of real data,
presented in Section \ref{s:num}, 
compares the performances
of our two classes 
to other well used discrete distributions. This
example demonstrates 
the superiority of the members of 
these classes for larger power $r$ of the polynomial VF,
vis-a-vis all other distributions. 
Section \ref{s:conclusion} is devoted to some
concluding remarks.


\section{Further Aspects and Analysis
and Presentation of the Two Classes}\label{s:further}

As stated above, our goal is to locate classes of VFs, 
subclasses of (\ref{LM}) and (\ref{LM2}), for which we can derive 
explicitly and relatively simple
expressions both for the $\psi(m)$ and $\phi(m)$ functions.
Our first class has variance function
(see \eqref{ABM1})
\[
V_p(m) = m\Big(1+\frac{m}{p}\Big)^{r},
\]
which belongs to the realm of (\ref{LM}). 
The special cases $r=0$, $r=1$ and $r=2$ correspond, respectively, to the
Poisson, negative binomial and Abel (or generalized Poisson) NEF's. 
The class in (\ref{ABM1}) is called the ABM class, as it was first presented by
\citet{awad2016} in a Bayesian framework. Further details
regarding such a Bayesian framework for the ABM class can be found in
\citet{barlev2017}. The second class does not have a
polynomial structure. Its variance function has the form 
(see \eqref{LMclass}
\[
V_p(m) = \frac{m}{\big(1-\frac{m}{p}\big)^{r}}.
\]
We call this class the LM class, as being a subclass 
of \eqref{LM2} which was presented in \citet{letac1990}.

Before we proceed to discuss the two classes separately in the
subsections below, we will present a number of general comments regarding
these classes (as well as any other classes too).

\begin{enumerate}
\item \textbf{Steepness}: 
The NEFs corresponding to the two classes of VFs
are concentrated on $\mathbb{N}_{0}$, thus their convex support is 
$C=[0,\infty )$. Hence, the first two classes (ABM and LMS) belong to steep
NEFs as their mean domain $M=\mathbb{R}^{+}$ coincides with $intC$. In
contrast, the LMNS class is nonsteep as the corresponding mean domain $M=%
\mathbb{(}0,p)$ is a proper subset of $(0,\infty )$.

\item \textbf{Infinitely divisibility and EDMs}: All of the three classes
constitute infinitely divisible NEFs as they are subsets of (\ref{b}) and
thus the dispersion parameter space $\Lambda =\mathbb{R}^{+}$\ (i.e., they
are VFs for all $p\in \mathbb{R}^{+}$). Thus, as indicated above, they
establish EDMs.

\item \textbf{The form of }$\Theta$: 
We notice that the set $\Theta $ is the image
of $\mathbb{R}^{+}$ for the ABM clas, and the image of $(0,p)$ of
the LM class by the map $m\mapsto \theta =\psi (m)$. Thus, it has the
form $(-\infty ,q)$, for some $q\in \mathbb{R}$. Obviously, the calculation
of the inverse function $m\mapsto \theta =\psi (m)$ cannot be done in an
elementary way for $r>2$ (and sometimes also not for $r=2$). 

If $\mu $ is bounded then one can impose conditions on $\mu $
to be a probability. The question arises, therefore, when $\mu $ is bounded.
The following simple lemma (whose proof is presented, without any loss of
generality, for the ABM class only) provides an answer.
\end{enumerate}

\begin{lemma}\label{l:bounded}
The generating $\mu $ of the NEF $\mathcal{F}$ is bounded iff $q\geq 0.$
\end{lemma}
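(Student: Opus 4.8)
The plan is to reduce the boundedness of $\mu$ to the finiteness of its Laplace transform at $\theta=0$, and then to read off that finiteness from the position of $q$. First I would observe that, since $\mu$ is carried by $\mathbb{N}_0$, its total mass is
\[
\mu(\mathbb{N}_0)=\sum_{n=0}^{\infty}\mu_n=\int_{\mathbb{R}}e^{0\cdot x}\,\mu(dx)=L_\mu(0).
\]
Hence $\mu$ is bounded if and only if $L_\mu(0)<\infty$, i.e.\ if and only if $0\in D_\mu$. Because $\Theta=\mathrm{int}\,D_\mu=(-\infty,q)$ and $D_\mu$ is an interval, we have the sandwich $(-\infty,q)\subseteq D_\mu\subseteq(-\infty,q]$, so the entire question collapses to whether the single point $\theta=0$ belongs to $D_\mu$.

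Next I would dispose of the two generic cases. If $q>0$, then $0\in(-\infty,q)\subseteq D_\mu$, so $L_\mu(0)<\infty$ and $\mu$ is bounded; concretely, since $\psi:\mathbb{R}^{+}\to(-\infty,q)$ is an increasing bijection, there is a finite $m_0\in\mathbb{R}^{+}$ with $\psi(m_0)=0$, and then \eqref{4} gives $L_\mu(0)=e^{\phi(m_0)}<\infty$. If $q<0$, then $0>q$ forces $0\notin(-\infty,q]\supseteq D_\mu$, whence $L_\mu(0)=\infty$ and $\mu$ is unbounded. This already yields the equivalence away from the boundary, using only the open-interval structure of $\Theta$.

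The remaining, and main, obstacle is the boundary case $q=0$, where $\theta=0$ is exactly the right endpoint of $\Theta$ and the sandwich is inconclusive. Here I would argue by a monotone limit: since $\theta\mapsto L_\mu(\theta)=\sum_n\mu_n e^{n\theta}$ is nondecreasing with nonnegative terms, monotone convergence gives
\[
L_\mu(0)=\lim_{\theta\uparrow 0}L_\mu(\theta)=\lim_{m\to\infty}e^{\phi(m)}=\exp\Big(\lim_{m\to\infty}\phi(m)\Big),
\]
where I use that $\theta=\psi(m)\uparrow q=0$ corresponds to $m\to\infty$. Thus boundedness at $q=0$ is equivalent to the finiteness of $\phi(\infty)\doteq\lim_{m\to\infty}\phi(m)$, and this is where the explicit form of the ABM class does the work: from $\phi'(m)=m/V_p(m)=(1+m/p)^{-r}$ (cf.\ \eqref{ABM1}) together with $\phi(0)=0$ one gets
\[
\phi(m)=\int_0^{m}\frac{dt}{(1+t/p)^{r}}=p\int_0^{m/p}\frac{du}{(1+u)^{r}},
\]
so $\phi(\infty)=p/(r-1)<\infty$ in the range $r\ge 2$, placing the endpoint $q=0$ on the bounded side and completing the equivalence $\mu\text{ bounded}\iff q\ge 0$. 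I expect the delicate point to be precisely this endpoint analysis: one must justify passing to the limit at the boundary of $\Theta$ and then verify that the explicit primitive $\phi$ remains finite there, since the open-interval argument that settles $q\neq 0$ carries no information about the boundary value of the Laplace transform.
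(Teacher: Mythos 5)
Your proof is correct and takes essentially the same route as the paper's: reduce boundedness to finiteness of the Laplace transform at $\theta=0$, dispose of $q>0$ and $q<0$ via the interval structure of $D_\mu$, and in the boundary case $q=0$ compute the limit $\lim_{m\to\infty}\phi(m)=\int_0^\infty (1+s/p)^{-r}\,ds=p/(r-1)$ for $r\ge 2$, which is exactly the paper's computation of $\lim_{\theta\uparrow 0}k(\theta)$ under the normalization $\lim_{\theta\to-\infty}k(\theta)=0$ (the same object, since $\phi=k\circ\psi$). Your explicit monotone-convergence justification at the endpoint merely makes rigorous a step the paper asserts without comment.
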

\begin{proof}
$\Longleftarrow :$ If $q>0$ then $k(0)<\infty $. Recall that $e^{k(0)}$ is
the total mass of $\mu$. If $q=0$, then for $\theta <0$ we assume that $%
\lim_{\theta \rightarrow -\infty }k(\theta )=0$ and write
\begin{eqnarray*}
k(\theta ) &=&\int_{-\infty }^{\theta }k^{\prime }(t)dt=\int_{0}^{k^{\prime
}(\theta )}k^{\prime }(\psi (s))\psi ^{\prime }(s)ds=\int_{0}^{k^{\prime
}(\theta )}\frac{s}{V(s)}ds \\
&=&\int_{0}^{k^{\prime }(\theta )}\frac{ds}{(1+\frac{s}{p})^{r}},
\end{eqnarray*}
where in the last equality we used VF corresponding to the ABM class. Since 
$\lim_{\theta \rightarrow 0}k^{\prime }(\theta )=\infty $ we can claim 
that
\begin{equation*}
\lim_{\theta \rightarrow 0}k(\theta )=\int_{0}^{\infty }
\frac{ds}{(1+\frac{s}{p})^{r}}=\frac{p}{r-1},r\geq 2.
\end{equation*}
This shows that when $q=0$, the total mass of $\mu $ is $e^{p/(r-1)}$. If 
$\mu $ is normalized to make it a probability then 
$\lim_{\theta \rightarrow -\infty }k(\theta )=0$ 
is no longer fulfilled after such a normalization.

\medskip\noindent
$\Longrightarrow :$ If $q<0$ the measure $\mu $ is unbounded since $0$ does
not belong to the closure of $\Theta $.
\end{proof}

\begin{enumerate}
\setcounter{enumi}{3}
\item \textbf{Cumulants and moments}: As we have already mentioned, the
cumulants (and thus also moments and central moments) of the ABM class
will also be polynomials. Their calculations are based on the
following simple result \citep[c.f.,][]{barlev1992b}.
Define an operator $L$ acting on $V$ by 
$L(V) =VV^{\prime}$
and define $L_{j}(V)=L\big(L_{j-1}(V)\big)$ for
$j=1,2,\ldots$, with $L_{0}(V)=V$.
Consider an NEF $\mathcal{F}=\mathcal{F}(\mu)$
with VF $(V,M)$, and let $\psi, \phi, \mu^*$
resulting from Proposition \ref{p:prop4.4}.
The $j$-th  cumulant of $\mu^*$  
expressed in terms of $m$, is given by
\begin{equation}
k^{(j)}_{\mu^*}(m)\doteq k^{(j)}_{\mu^*}\big(\psi(m)\big)
=L_{j-2}\big(V(m)),\text{ for all }
j=2,3,\ldots,\text{ and }m\in M.  \label{cumulants}
\end{equation} 
Let us find in this way the first four cumulants,
and moments in the ABM class.
Fix $m,p,r$, thus $V_p(m)=m(1 + m/p)^r$. Then,
\begin{align*}
k^{(1)}_{\mu^*_p}(m) &= m\\ 
k^{(2)}_{\mu^*_p}(m) &= V_p(m)=m\big(1+\frac{m}{p}\big)^r\\ 
k^{(3)}_{\mu^*_p}(m) &= L_1\big(V_p(m)\big)=V_p(m)V_p^\prime(m) = 
m\big(1+\frac{m}{p}\big)^{2r-1}
\big(1 + \frac{m}{p}(1+r)\big)\\ 
k^{(4)}_{\mu^*_p}(m) &= L_2\big(V_p(m)\big)
=L\Big(L_1\big(V_p(m)\big)\Big)\\
&=V_p(m)\Big(L_1\big(V_p(m)\big)\Big)'=
V_p(m)\Big(V_p(m)V_p(m)'\Big)'\\
&=V_p(m)\Big(\big(V_p(m)\big)'\Big)^2 + \big(V_p(m)\big)^2V_p''(m)\\
&=m\big(1+\frac{m}{p}\big)^{3r-2}
\Big(1 + \frac{m}{p}(1+2r)\big(2+\frac{m}{p}(1+3)\big)\Big).
\end{align*}
Moreover, let $X_p$ be the random variable associated with
parameters $m,p,r$ in the EDM of the ABM class, thus
\begin{equation}
\mathbb{P}(X_p=n)= (\mu^*_p)_{n}\exp\big(\psi_p(m)n-\phi_p(m)\big) ,
\quad
n=0,1,\ldots.  \label{probability of X}
\end{equation}
Clearly, it has mean $m$ and variance $V_p(m)$.
To derive its higher central moments it is most convenient to 
consider its central moment generating function,
\[
C_{X_p}(t)\doteq \mathbb{E}\big[\exp\big(t(X_p-m)\big)\big]
=\exp\big(k_{X_p}(t) -mt\big),
\]
where $k_{X_p}(t)$ is the cumulant function of $X_p$,
\[
k_{X_p}(t)\doteq \log \mathbb{E}\big[\exp(tX_p)\big]
=k_{\mu^*_p}\big(\psi_p(m)+t\big) - \phi_p(m).
\]
Thus,
\[
C_{X_p}(t) = \exp\Big(k_{\mu^*_p}\big(\psi_p(m)+t\big) - mt - \phi_p(m)
\Big),
\]
and the $j$-th central moment is obtained by $C_{X_p}^{(j)}(0)$.
From these central moments we get the skewness and kurtosis.
After doing the calculus,
\begin{align*}
\text{skewness} &=\frac{C_{X_p}^{(3)}(0)}{\big(C_{X_p}^{(2)}(0)\big)^{3/2}}
=\frac{k^{(3)}_{\mu^*}(m)}{\big(k^{(2)}_{\mu^*}(m)\big)^{3/2}}
=\frac{V_p(m)V_p^\prime(m)}{V_p(m)\sqrt{V_p(m)}}\\
&= m^{-1/2}\,\big(1+\frac{m}{p}\big)^{(r/2)-1}\,
\big(1 + \frac{m}{p}(1+r)\big)\\
\text{kurtosis} &= \frac{C_{X_p}^{(4)}(0)}{\big(C_{X_p}^{(2)}(0)\big)^2}
= \frac{k^{(4)}_{\mu^*}(m)}{\big(k^{(2)}_{\mu^*}(m)\big)^{2}}
=\frac{V_p(m)\big(V_p^\prime(m)\big)^2+V_p^2(m)V_p^{\prime\prime}(m)}{V_p^2(m)}\\
&=\frac{\big(V_p^\prime(m)\big)^2}{V_p^2(m)} + V_p^{\prime\prime}(m)
=m^{-1}\big(1+\frac{m}{p}\big)^{r-2}
\Big(1 + \frac{m}{p}(1+2r)\big(2+\frac{m}{p}(1+3)\big)\Big).
\end{align*}

\item 
\textbf{Overdispersion}: Recall that in statistics, overdispersion is
the presence of greater variability in a data set than would be expected
based on a given statistical model. For instance, the Poisson NEF which is
commonly used in practice to model count data (e.g., number of insurance
claims; number of customers arriving into a queueing system). The
theoretical mean and variance for the Poisson model are equal. On the other
hand, in a large number of empirical data sets, the sample variance is
considerably larger than the sample mean. Consequently, researchers have
tried to model such data sets by families of distributions, such as the
negative binomial and the generalized Poisson -Abel) distributions, for
which the variance is larger than the mean. The statistical literature is
full of articles on this subject, but we refrain from citing them for the
sake of brevity. 

Consider the polynomial VF in (\ref{LM}) and denote explicitly
its degree; i.e.,
\begin{equation*}
V_{r+1}(m) \doteq
m\prod_{i=1}^{r}\big( 1+\frac{m}{p_{i}}\big).
\end{equation*}
Then
trivially we have that the larger 
the degree of the polynomial,
the larger is $V_{r}$, i.e.,
\begin{equation*}
m = V_{1}(m) < V_{2}(m) < \cdots.
\end{equation*}
Firstly, the latter property indicates that all of the associated NEFs 
distributions are overdispersed with respect to the Poisson
distribution, and secondly, there is an ascending order in $r$ of such an
overdispersion. Similarly, this overdispersion property trivially holds also
for the second class  
given in (\ref{LM2}). As the ABM, and the
LM are subclasses of (\ref{LM2}) or(\ref{LM}), they share the same
overdispersion property. Moreover, one can simply realize that for any 
degree $r$ one has $V_{r}^{ABM}<V_{r}^{LM}$, i.e., 
the LM class is more overdispersed than the ABM one.
\end{enumerate}

\begin{remark}\label{r:HD}
Consider a simple polynomial VF the form 
\begin{equation}\label{HD1}
V(m)=m(1+m^{r}), \quad M=\mathbb{R}^{+},
\end{equation}
where  $r\in \mathbb{N}_{0}$. This structure of polynomial
VFs is called the HD class, as it has been introduced 
by \citet{hinde1998} for overdispersed
models and characterized by \citet{kokonendji2007} to analyze
overdispersed and zero-inflated count data. Further theoretical and data
analysis of the HD class can be found in \citet{kokonendji2004b,kokonendji2008}.
Though the HD
class is not a special case of (\ref{LM}), this class of VFs fulfill the
premises of \citet[Proposition 4.4]{letac1990}. Thus, the corresponding NEFs
distributions are supported on $\mathbb{N}_{0}$ and 
the respective $\mu^*_{n}$'s can be computed by Proposition \ref{p:prop4.4}. 
Nonetheless, for the HD class, although $\psi(m)$ in (\ref{d}) 
can be nicely expressed as 
\begin{equation*}
\psi (m)=\int \frac{dm}{m(1+m^{r})}=
\log m-\frac{1}{r}\log (m^{r}+1) +a,
\end{equation*}
$\phi(m)$, however, has a very complex expression of the form
\begin{equation*}
\phi(m)=\int \frac{dm}{1+m^{r}}
=m_{2} F_{1}(1,\frac{1}{r};1+\frac{1}{r};-m^{r})+b,  
\end{equation*}
where
\begin{equation*}
_{2}F_{1}(a,b;c;z)=\sum_{k=0}^{\infty }
\frac{(a)_{k}(b)_{k}}{(c)_{k}}\frac{z^{k}}{k!}
\end{equation*}
is the generalized hypergeometric function of type $2$ and $1$,
respectively, and $(d)_{k}$ is the Pochhammer symbol. Such an expression
for $\phi(m)$ makes the corresponding mean value parameterization of
the HD class unuseful for practical considerations.
\end{remark}


\section{Some Analysis of the ABM and LM Classes}\label{s:analysis}

In the following three subsections we will discuss the two classes in two
aspects. One is to find explicit expressions for $\psi(m)$ and 
$\phi(m)$ functions that satisfy the conditions
\eqref{cond1}, \eqref{cond2}, and \eqref{con3},  and for which 
Proposition \ref{p:prop4.4} is applicable. The second aspect is to show
that the distributions of the relevant NEFs are zero-inflated with respect
to the Poisson NEF and among themselves in an ascending order. Recall that a
zero-inflated model is a statistical model based on a zero-inflated
probability distribution, i.e. a distribution that allows for frequent
zero-valued observations. In various insurance data the probability of the
event of no claims during the insured period is rather large and the Poisson
model does not fit. Various other models have been suggested in the realm of
zero-inflated models in which the probability of zero is larger than the
probability of nonzero. Such zero-inflated distributions are naturally
overdispersed relative to the Poisson distribution. On this subject, too,
the statistical literature is full of relevant articles, but we refrain from
quoting them for reasons of brevity.

In each subsection we provide two propositions. One relates to the
computations of the $\psi(m)$, $\phi(m)$, and $G(m)$ functions
fulfilling the conditions \eqref{cond1}, \eqref{cond2}, and \eqref{con3}; 
the second proposition relates to the zero-inflated
property.


\subsection{The ABM Class}\label{ss:abm}
The ABM class has been first introduced by \citet{awad2016}
for implementing mortality projections in actuarial science. In this
respect, the Lee-Carter model \citep{lee1992}, and variants thereof
\citep[e.g.,][]{renshaw2006} is a largely acceptable method of
mortality forecasting. \citet{awad2016} have dealt with
predicting mortality rates by embedding the Lee-Carter model within a
Bayesian framework. They used the ABM class of counting distributions as
alternatives to the Poisson counts of events (deaths) under the Lee-Carter
modeling for mortality forecast and showed that members of the ABM class
predicts better than the Poisson the mortality rates of elderly age people.
This has been demonstrated for national data of the US, Ireland and Ukraine.
Since the Bayesian approach was involved, it was not relevant there to
calculate neither the constants of integration for the primitives $\psi$
and $\phi$,
nor the $\mu _{n}^*$'s
in (\ref{LM1}), as these constants and mass points are cancelled out while
computing the appropriate posterior distribution 
\citep[for further details see][]{barlev2017}. 
They also did not demonstrate how the
general expressions are obtained for $\psi $ and $\phi$. Therefore, we
will provide the appropriate proof.

\begin{proposition}\label{p:abmfunctions}
Consider  the ABM class with the variance function 
given in (\ref{ABM1}) for $r\geq 1$.
The corresponding 
$\psi_p(m) $, $\phi_p(m)$ and $G_p(m)$ functions
fulfilling the conditions \eqref{cond1}, \eqref{cond2}, and \eqref{con3}, have
the forms
\begin{align}
\psi_p(m)&=\log \frac{m}{m+p} + \sum_{j=1}^{r-1}
\frac{1}{j}\Big(\frac{p^{j}}{(m+p)^{j}}-1\Big); \label{psi ABM1}
\\
\phi_p(m)&=
\begin{cases}
p\log\frac{m+p}{p}, & \quad (r=1);\\
\frac{p}{r-1}\Big(1-\big(\frac{p}{m+p}\big)^{r-1}\Big), &\quad (r\geq 2);
\end{cases}
\label{psi1 ABM1} \\
G_p(m)&= (m+p)\exp \Big( \sum_{j=1}^{r-1}
\frac{1}{j}\big( \frac{p^{j}}{(m+p)^{j}}-1\big) \Big).  \label{G for ABM1}
\end{align}
\end{proposition}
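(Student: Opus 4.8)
The plan is to handle the three functions in turn, reducing each to an elementary integral and then fixing the constant of integration from the limit conditions. Throughout I write $V_p(m)=m\big(1+\frac{m}{p}\big)^{r}=p^{-r}m(m+p)^{r}$, and recall from Proposition \ref{p:prop4.4} that $\psi_p$ is a primitive of $1/V_p$, that $\phi_p$ is a primitive of $m/V_p$, and that $G_p(m)=m\exp(-\psi_p(m))$.

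The function $\phi_p$ is the easiest. Since $m/V_p(m)=\big(1+\frac{m}{p}\big)^{-r}$, the substitution $u=1+m/p$ turns the primitive into $p\int u^{-r}\,du$, which equals $p\log u$ for $r=1$ and $-\frac{p}{r-1}u^{-(r-1)}$ for $r\ge 2$. Imposing \eqref{con3}, namely $\lim_{m\to 0}\phi_p(m)=0$ (i.e. evaluating at $u=1$), fixes the constant and yields the two cases of \eqref{psi1 ABM1}. Condition \eqref{cond1} is then automatic, because $\phi_p'(m)=\big(1+\frac{m}{p}\big)^{-r}\to 1$ as $m\to 0$.

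The function $\psi_p$ is the crux, since $1/V_p(m)=p^{r}/\big(m(m+p)^{r}\big)$ demands a partial-fraction decomposition. I would obtain this by a telescoping recursion: writing $p=(m+p)-m$ gives
\begin{equation*}
\frac{p^{k}}{m(m+p)^{k}}=\frac{p^{k-1}}{m(m+p)^{k-1}}-\frac{p^{k-1}}{(m+p)^{k}},
\end{equation*}
and iterating from $k=r$ down to $k=1$ collapses the leading term to $1/m$, yielding
\begin{equation*}
\frac{1}{V_p(m)}=\frac{1}{m}-\sum_{k=1}^{r}\frac{p^{k-1}}{(m+p)^{k}} .
\end{equation*}
Integrating term by term gives $\log m$ from the first summand, $-\log(m+p)$ from the $k=1$ term, and the powers $\frac{p^{k-1}}{(k-1)(m+p)^{k-1}}$ for $k\ge 2$; reindexing by $j=k-1$ produces the shape of \eqref{psi ABM1} up to an additive constant. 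To fix that constant I use the limit condition: a direct computation gives $m\exp(-\psi_p(m))=(m+p)\exp(-\sum_{j=1}^{r-1}\frac{1}{j}(\frac{p^{j}}{(m+p)^{j}}-1)-C)$, whose limit as $m\to 0$ is $p\,e^{-C}$ because the finite sum vanishes there, so taking $C=0$ reproduces the normalization in \eqref{psi ABM1}. I note that this makes the limit equal to $p$ rather than $1$, so strictly $\psi_p$ obeys the relaxed form of \eqref{cond2} in Lemma \ref{l:prop4.4} with $c=p$; equivalently one may first carry out the argument for the base family with VF $m(1+m)^{r}$, satisfying \eqref{cond2} exactly, and then invoke the EDM scaling $\psi_p(m)=\psi(m/p)$ of Corollary \ref{c:prop4.4} and substitute $m\mapsto m/p$.

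Finally, $G_p$ needs no further integration: substituting \eqref{psi ABM1} into $G_p(m)=m\exp(-\psi_p(m))$, the $\log\frac{m}{m+p}$ term cancels the factor $m$ against $m+p$, leaving $(m+p)$ times the exponential of the remaining finite sum, which is \eqref{G for ABM1}. The only step that really demands care is the telescoping identity and the bookkeeping of the integration constant; once the decomposition of $1/V_p$ is established, the rest is routine substitution and limit-taking.
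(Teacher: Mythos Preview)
Your proof is correct and follows essentially the same route as the paper's: a partial-fraction decomposition of $1/V_p$ followed by termwise integration, with the constants fixed by the limit conditions \eqref{cond2}--\eqref{con3} (via Corollary~\ref{c:prop4.4}). The only cosmetic differences are that the paper first reduces to the base case $p=1$ through the EDM identities \eqref{e:psip}--\eqref{e:phip} and obtains the decomposition via the geometric-series identity $1-x^r=(1-x)\sum_{j=0}^{r-1}x^j$ with $x=1/(1+m)$, whereas you work directly with general $p$ and reach the same decomposition by your telescoping recursion.
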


\medskip\noindent
We exclude the trivial case $r=0$ (Poisson). As usual, an empty sum
$\sum_{j=1}^{0}\cdot=0$ in \eqref{psi ABM1} and \eqref{G for ABM1}
in the $r=1$ case.

\begin{proof}
Using the EDM properties \eqref{e:psip} and \eqref{e:phip},
it suffices to determine the $\psi(m), \phi(m)$ and $G(m)$
functions of the originating ABM NEF with variance function
$V(m)=m(1+m)^r$. Consider
\[
\int \frac{1}{V(m)}\,dm = \int \frac{1}{m(1+m)^r}\,dm, 
\]
and apply polynomial division,

\begin{align*}
&\frac{1}{m(1+m)^{r}}-\frac1m = 
-\frac1m\Big(1 - \big(\frac{1}{1+m}\big)^r\Big)\\
&= -\frac1m\Big(1 - \frac{1}{1+m}\Big)\Big(1 + \frac{1}{1+m}
+\big(\frac{1}{1+m}\big)^2+\cdots+\big(\frac{1}{1+m}\big)^{r-1}\Big)\\
&=-\frac{1}{1+m}\sum_{j=0}^{r-1}\big(\frac{1}{1+m}\big)^{j}
=-\sum_{j=1}^{r}\frac{1}{(1+m)^{j}}.
\end{align*}%
Hence
\begin{align*}
& \psi(m) = \int \frac{1}{m(1+m)^r}\,dm
=\int \Big(\frac1m -\sum_{j=1}^{r}\frac{1}{(1+m)^{j}}\Big)\,dm\\
&=\log m - \log(1+m) + \sum_{j=2}^r\frac{1}{j-1}\,\frac{1}{(1+m)^{j-1}} +c
=\log \frac{m}{1+m} + \sum_{j=1}^{r-1}\frac{1}{j}\,\frac{1}{(1+m)^{j}} +c,
\end{align*}
where $c$ is the integration constant.
By the EDM property \eqref{e:psip} we get
\begin{equation}\label{psi 2 for ABM}
\psi_p(m)=\psi(m/p)=
\log \frac{m}{m+p}+\sum_{j=1}^{r-1}\frac{1}{j}\,\frac{p^{j}}{(m+p)^{j}}+c.
\end{equation}
Substituting $G_p(m)=m\exp\big(-\psi_p(m)\big))$, and solving
$G_p(0)=p$ (see Corollary \ref{c:prop4.4}), gives
\begin{align*}
& G_p(0)=\lim_{m\to 0}
(m+p)\exp \Big(-\sum_{j=1}^{r-1}\frac{1}{j}\,\frac{p^{j}}{(m+p)^{j}}
-c\Big)=p \, \exp\Big(-\sum_{j=1}^{r-1}\frac{1}{j}-c\Big)=p\\
& \Leftrightarrow \quad
c = -\sum_{j=1}^{r-1}\frac{1}{j},
\end{align*}
resulting in the forms (\ref{psi ABM1}) and 
(\ref{G for ABM1}). 
Recall that $\psi_p$ maps the mean domain $M$ into the space $\Theta$
of the natural parameter $\theta$.
From (\ref{psi ABM1}) we note that it is not possible to
express the inverse map ($m$ as a function of $\theta$),
implying that the corresponding
Laplace transform cannot be explicitly expressed as a function 
of $\theta $. This is the situation that will prevail in 
the other classes of VFs under
consideration.

\medskip\noindent
Now, to find $\phi_p(m)$ 
fulfilling (\ref{con3}), we again solve first for 
\[
\phi(m)=\int\frac{m}{V(m)}\,dm=
\int\frac{1}{(1+m)^r}\,dm = 
\begin{cases}
\log(1+m) +d, &\quad (r=1);\\
-\frac{1}{r-1}\,\frac{1}{(1+m)^{r-1}} + d, &\quad (r\geq 2),
\end{cases}
\]
with integration constant $d$. 
Applying EDM property $\phi_p(m)=p\phi(m/p)$, and the
condition $\phi_p(0)=0$,
we obtain
\[
d = 
\begin{cases}
0, &\quad (r=1);\\
\frac{p}{r-1}, &\quad (r\geq 2). 
\end{cases}
\]
Consequently, the primitive $\phi_p$ to
be used in Proposition \ref{p:prop4.4} has the form (\ref{psi1 ABM1}).
\end{proof}

\bigskip\noindent
Now we go back to discussing the ABM class in the context of zero-inflated
distributions. The probability mass at a point $n, n=0,1,\ldots$, 
is given in (\ref{probability of X}) with $\psi_p(m)$ and 
$\psi_p(m)$ given in 
(\ref{psi ABM1}) and (\ref{psi1 ABM1}), respectively. 
Specifically, using $(\mu^*_p)_0=1$,
the probability mass at $0$ is
\begin{equation}\label{probabily at 0 ABM}
P_{r}(0;p,m)\doteq (\mu^*_p)_{0}
\exp \big( -\phi_{p,r}(m)\big) 
=\exp \big( -\phi_{p,r}(m)\big),
\end{equation}
$r=1,2,\ldots$, 
where $\phi_{p,r}(m)$ denotes the $\phi_p(m)$ function 
when the VF has degree $r+1$.
Note that the probability at $0$ of the Poisson NEF with $r=0$,
is $e^{-m}$. We present the following proposition according to which
the probability at $0$ is an  
increasing function in $r$.
the associated distributions  become more and more
zero-inflated, a feature that enables the ABM class to serve as statistical
model for zero-inflated data.

\begin{proposition}\label{p:abmzero}
The zero-mass probability $P_{r}(0;p,m)$
in the ABM class is increasing in $r\geq 0$.
\end{proposition}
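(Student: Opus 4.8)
The plan is to start from the identity $P_{r}(0;p,m)=\exp\big(-\phi_{p,r}(m)\big)$ recorded in \eqref{probabily at 0 ABM}. Since $\exp(-\cdot)$ is strictly decreasing, the assertion that $P_{r}(0;p,m)$ increases in $r$ is equivalent to the single statement that $\phi_{p,r}(m)$ \emph{decreases} in $r$, for every fixed $m,p>0$. So I would reduce the whole proposition to establishing monotonicity of the one-parameter quantity $\phi_{p,r}(m)$.

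The next step is to unify the piecewise expression \eqref{psi1 ABM1}. I would substitute $t=\tfrac{p}{m+p}$, which lies in $(0,1)$ whenever $m,p>0$, and set $s=r-1$. Then for $r\geq 2$ the formula \eqref{psi1 ABM1} reads $\phi_{p,r}(m)=\tfrac{p(1-t^{s})}{s}$. I would then verify that this single expression also absorbs the boundary regimes: as $s\to 0$ it tends to $-p\log t=p\log\tfrac{m+p}{p}$, matching the $r=1$ case, and at $s=-1$ (i.e. $r=0$) it gives $p(1/t-1)=m$, matching the Poisson zero-mass $e^{-m}$ with $\phi_{p,0}(m)=m$. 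Hence $\phi_{p,r}(m)=g(r-1)$, where $g(s)\doteq \tfrac{p(1-t^{s})}{s}$ (with the continuous extension $g(0)\doteq-p\log t$) is a smooth function of the real variable $s$, and it suffices to show that $g$ is strictly decreasing on $\mathbb{R}$.

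I would then differentiate. Writing $a=\log t<0$, a short computation gives
\[
g'(s)=p\,\frac{e^{as}(1-as)-1}{s^{2}},
\]
so the sign of $g'(s)$ is governed by $e^{as}(1-as)-1$. Setting $u=as$, the problem collapses to the elementary inequality $e^{u}(1-u)<1$ for all $u\neq 0$, which I would prove via the auxiliary function $h(u)=e^{u}(1-u)$: since $h'(u)=-u\,e^{u}$, the function $h$ increases on $(-\infty,0)$ and decreases on $(0,\infty)$, so it attains a strict maximum $h(0)=1$ and lies strictly below $1$ elsewhere. Consequently $g'(s)<0$ for $s\neq 0$, whence $g$ is strictly decreasing, $\phi_{p,r}(m)$ is decreasing in $r$, and $P_{r}(0;p,m)$ is increasing in $r$, as claimed.

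The only real obstacle here is organizational rather than analytic: the stated $\phi_{p,r}$ is piecewise and the index $r$ is an integer, so one must ensure that the monotonicity survives across the joins between $r=0$, $r=1$, and $r\geq 2$. The substitution $s=r-1$ is precisely what resolves this, by exhibiting all three regimes as values of the one analytic function $g$; after that, the inequality $e^{u}(1-u)\leq 1$ does all the work. If one wished to avoid calculus entirely, the same conclusion could be reached by comparing $g(s+1)$ with $g(s)$ directly at integer arguments, but the derivative route is cleaner and yields strict monotonicity at once.
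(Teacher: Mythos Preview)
Your proof is correct and rests on the same core inequality as the paper's: the function $s\mapsto (1-t^{s})/s$ with $t\in(0,1)$ is strictly decreasing, which in both arguments reduces to $e^{u}(1-u)<1$ for $u\neq 0$. The difference is purely organizational. The paper first reduces to $p=1$, applies the $(1-x^{s})/s$ monotonicity only for $s>0$ (i.e.\ $r\geq 2$), and then disposes of the two boundary transitions $P_{0}<P_{1}$ and $P_{1}<P_{2}$ by separate ad hoc inequalities. Your substitution $s=r-1$ absorbs the $r=0$ and $r=1$ cases into the same analytic function $g$, so a single derivative computation on all of $\mathbb{R}$ covers every transition at once; this is cleaner and also yields strict monotonicity for real $r$, not just integers. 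Nothing is lost by your route, and the boundary case-splitting in the paper is avoided entirely.
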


\begin{proof}
The zero probabiities are by using \eqref{probabily at 0 ABM} and \eqref{e:phip},
for any $r,p,m>0$,
\[
P_{r}(0;p,m) = \exp \big( -\phi_{p,r}(m)\big)
=\exp \big( -p\phi_{1,r}(m/p)\big).
\]
Thus, for showing that $P_{r}(0;p,m)$ is increasing, it suffices
to take $p=1$; i.e., to prove that $\phi_{1,r}(m)$ is decreasing, where
$\phi_{1,r}(m)$ is given in \eqref{psi1 ABM1} with $p=1$.
First, we consider $r>1$ for which
\[
\phi_{1,r}(m) = \frac{1}{r-1}\Big(1-\big(\frac{1}{1+m}\big)^{r-1}\Big).
\]
Define functions $\{f_x(s) : (0,\infty)\to\mathbb{R}, x\in(0,1)\}$ by
\[
f_x(s) = \frac1s(1-x^s).
\]
We shall argue that $f_x(s)$ is decreasing (in $s>0$)
for any $x\in (0,1)$. The derivative
\[
\frac{d}{ds} f_x(s) = -\frac{1}{s^2}(1-x^s) - \frac1s\,x^s \log x
=-\frac1s\Big(\frac1s + \frac1s\big(x^s + sx^s\log x\big)\Big).
\]
A simple calculus shows that $x^s + sx^s\log x>-1$ for $s>0$ for any $x\in(0,1)$
(for instance by determining its minimum).
Thus, $f_x(s)$ is decreasing (as function of $s>0$), and consequently,
$\phi_{1,r}(m)$ is decreasing for $r>1$, and  
$P_{r}(0;p,m)$ is increasing for $r\geq 2$ for any values of $p>0$ and $m>0$.

\medskip\noindent
To complete the proof we show (i) $P_{0}(0;p,m)<P_{1}(0;p,m)$,
and (ii) $P_{1}(0;p,m)<P_{2}(0;p,m)$.

\begin{enumerate}[(i).]
\item
$P_{0}(0;p,m)=e^{-m}$ is the zero-probability of the Poisson distribution
with mean $m$. From \eqref{psi1 ABM1} we see 
$P_1(0;p,m)=\big(p/(m+p)\big)^p$. Thus
\begin{align*}
& P_{0}(0;p,m)<P_{1}(0;p,m) 
\;\;\Leftrightarrow\;\;
e^{-m} < \big(\frac{p}{m+p}\big)^p
\;\;\Leftrightarrow\;\;
e^{-m} \big(\frac{m+p}{p}\big)^p < 1.
\end{align*}
The latter is an easy calculus to show for any $p>0$ and $m>0$,
for instance because the lefthand side equals 1 for $m=0$, but
is strictly decreasing for $m\geq 0$.

\item
It suffices to consider $p=1$ to conclude
\begin{align*}
& P_{1}(0;p,m)<P_{2}(0;p,m) 
\;\;\Leftrightarrow\;\;
P_{1}(0;1,m)<P_{2}(0;1,m)
\;\;\Leftrightarrow\;\;
\phi_{1,1}(m)>\phi_{1,2}(m)\\
& \Leftrightarrow\;\;
\log(m+1) > 1 - \big(\frac{1}{m+1}\big)^2.
\end{align*}
Equivalently, $\log x > 1 - 1/x^2$ for $x>1$, which is again a
simple calculus exercise.
\end{enumerate}

\end{proof}


\subsection{The LM Class}\label{ss:LM}

The LM class is given by VFs of the form (\ref{LMclass}). Recall that
the corresponding class of NEFs when $r\geq 1$ is non-steep with mean domain 
$(0,p)$, support $\mathbb{N}_{0}$ and convex support $[0,\infty )$. 
When $r=0 $ the corresponding VF is the Poisson one. 
\citet{bryc2005}
considered a special case $V(m)=m/(1-\frac{m}{p})$ on the mean domain $(0,p)$
and compute explicitly a measure $\mu $ such that 
$\mathcal{F}=\mathcal{F}(\mu )$ is an NEF supported on $\mathbb{N}_{0}$.

We will compute the primitives $\psi_p$ and $\phi_p$ that fulfill the 
conditions \eqref{cond1}, \eqref{cond2}, and \eqref{con3},
in much the same way as we did in Section \ref{ss:abm} for the ABM class.
In Appendix \ref{a:lmmisc} we will explore
a second way to find these functions, namely 
by following more closely the proof of Proposition 4.4 of \citet{letac1990},
which is based on using the Lagrange formula, and
express the $\mu^*_{n}$'s by means of Hermite polynomials. 

\begin{proposition}\label{p:lmfunctions}
Consider the LM class with the variance function  
given in (\ref{LMclass}) for $r\geq 0$.
The corresponding $\psi_p(m)$, $\phi_p(m)$, and $G_p(m)$ functions 
fulfilling the conditions \eqref{cond1}, \eqref{cond2}, and \eqref{con3}, have the
forms 
\begin{align}
\psi_p(m)&=\log \frac{m}{p} + \sum_{i=1}^{r}(-1)^i\frac{1}{i}\binom{r}{i}
\big(\frac{m}{p}\big)^{i}; 
\label{psi LMNS} \\
\phi_p(m)&=\frac{p}{r+1}\Big(1-\big(1-\frac{m}{p}\big)^{r+1}\Big);
\label{psi 1 LMNS}\\
G_p(m) &= 
p\,\exp\Big(
-\sum_{i=1}^{r}(-1)^i\frac{1}{i}\binom{r}{i}\big(\frac{m}{p}\big)^{i}\Big)
\label{G LMNS}
\end{align}
\end{proposition}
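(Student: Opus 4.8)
The plan is to follow the same route as in the proof of Proposition \ref{p:abmfunctions}. By the EDM scaling relations \eqref{e:psip} and \eqref{e:phip}, namely $\psi_p(m)=\psi(m/p)$ and $\phi_p(m)=p\phi(m/p)$, it suffices to determine the primitives of the originating NEF, for which $p=1$ and the variance function is $V(m)=m/(1-m)^r$ on the mean domain $(0,1)$. The two integrals I must carry out are $\phi(m)=\int m/V(m)\,dm=\int (1-m)^r\,dm$ and $\psi(m)=\int dm/V(m)=\int (1-m)^r/m\,dm$; once these are in hand, scaling by the EDM relations produces the claimed $\psi_p$, $\phi_p$, and then $G_p$ is read off from its definition $G_p(m)=m\exp(-\psi_p(m))$.

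First I would handle $\phi$, which is immediate: $\int(1-m)^r\,dm=-\frac{1}{r+1}(1-m)^{r+1}+d$. Imposing condition \eqref{con3}, i.e. $\phi(0)=0$, fixes $d=1/(r+1)$, and scaling via $\phi_p(m)=p\phi(m/p)$ yields \eqref{psi 1 LMNS}. As a by-product, $\phi_p'(m)=m/V_p(m)=(1-m/p)^r$, so $\phi_p'(0)=1$ and condition \eqref{cond1} holds automatically, confirming that the NEF $\mathcal{F}_p$ is concentrated on $\mathbb{N}_0$.

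Next I would compute $\psi$ by expanding $(1-m)^r$ through the binomial theorem, writing $(1-m)^r/m=1/m+\sum_{i=1}^r(-1)^i\binom{r}{i}m^{i-1}$, and integrating term by term. This gives $\psi(m)=\log m+\sum_{i=1}^r(-1)^i\frac{1}{i}\binom{r}{i}m^i+c$, and scaling produces $\psi_p(m)=\log(m/p)+\sum_{i=1}^r(-1)^i\frac{1}{i}\binom{r}{i}(m/p)^i+c$. To pin down the constant $c$ I would invoke Corollary \ref{c:prop4.4} (which rests on Lemma \ref{l:prop4.4}), requiring $G_p(0)=p$. Since $m\exp(-\log(m/p))=p$, one obtains $G_p(m)=p\exp(-\sum_{i=1}^r(-1)^i\frac{1}{i}\binom{r}{i}(m/p)^i-c)$, whose value at $m=0$ is $pe^{-c}$ because each summand carries a factor $(m/p)^i$ with $i\ge 1$ and hence vanishes at the origin. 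Setting $pe^{-c}=p$ forces $c=0$, which delivers \eqref{psi LMNS} and, reading off directly, \eqref{G LMNS}.

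Unlike the ABM computation, where the analogous polynomial part of $\psi$ is nonzero at $m=0$ and the constant comes out as $-\sum_{j=1}^{r-1}1/j$, here the polynomial part of $\psi$ vanishes at the origin, so $c=0$; this is the one place where the LM case genuinely differs and deserves a careful check. It then remains to confirm condition \eqref{cond2} for the base ($p=1$) family, which is immediate since $\lim_{m\to 0}m\exp(-\psi(m))=\exp(-\sum_{i=1}^r(-1)^i\frac{1}{i}\binom{r}{i}m^i)|_{m=0}=1$, while the relaxed form with constant $p$ for general $p$ is exactly what Corollary \ref{c:prop4.4} supplies. I do not anticipate any real obstacle: the only points requiring attention are the binomial bookkeeping in the term-by-term integration and the correct fixing of the integration constant through the relaxed normalization of Corollary \ref{c:prop4.4}.
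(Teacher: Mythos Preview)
Your proposal is correct and follows essentially the same route as the paper's own proof: reduce to the base case $p=1$ via the EDM relations \eqref{e:psip}--\eqref{e:phip}, integrate $(1-m)^r$ directly for $\phi$ and expand $(1-m)^r/m$ by the binomial theorem for $\psi$, then fix the integration constants through \eqref{con3} and the normalization $G_p(0)=p$ of Corollary~\ref{c:prop4.4}. The only difference is cosmetic ordering (you treat $\phi$ before $\psi$) and your explicit verification of \eqref{cond1} and \eqref{cond2}, which the paper leaves implicit.
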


\begin{proof}
We determine the $\psi(m), \phi(m)$ and $G(m)$
functions of the originating LM NEF with variance function
$V(m)=m / (1-m)^r$. Namely,
\begin{align*}
&\int \frac{1}{V(m)}\,dm = \int \frac{(1-m)^r}{m}\,dm
=\int \frac1m\sum_{i=0}^r\binom{r}{i}(-m)^i\,dm\\
&= \int \frac1m\,dm + \sum_{i=1}^r(-1)^i \binom{r}{i}\int m^{i-1}\,dm
= \log m + \sum_{i=1}^r (-1)^i\frac1i\binom{r}{i}m^i + c,
\end{align*}
which gives by \eqref{e:psip}

\begin{equation}\label{psi of LMNS}
\psi_p(m)=\log \frac{m}{p} 
+\sum_{i=1}^{r}(-1)^i\frac{1}{i}\binom{r}{i}\big(\frac{m}{p}\big)^{i}+c.
\end{equation}
Substituting $G_p(m)=m\, \exp\big(-\psi(m)\big)$,
and solving $G_p(0)=p$ (see Corollary \ref{c:prop4.4}), gives
\[
G_p(0) = \lim_{m\to 0}p\,\exp\Big(
-\sum_{i=1}^{r}(-1)^i\frac{1}{i}\binom{r}{i}\big(\frac{m}{p}\big)^{i}-c\Big)
= p 
\quad \Leftrightarrow \quad
c = 0,
\]
resulting in the forms (\ref{psi LMNS}) and 
(\ref{G LMNS}).

\medskip\noindent
For $\phi(m)$,
\[
\int\frac{m}{V(m)}\,dm=
\int(1-m)^r\,dm =  -\frac{1}{r+1}(1-m)^{r+1} + d.
\]
Applying EDM property $\phi_p(m)=p\phi(m/p)$, and the
condition $\phi_p(0)=0$,
we obtain
\[
d = \frac{p}{r+1}.
\]
Consequently, the primitive $\phi_p$ has the form (\ref{psi 1 LMNS}).
\end{proof}

\medskip\noindent
We now examine the zero-inflated property. We use similar notations as in
the ABM case and denote by 
$P_r(0; p,m)$ the zero-mass probability of the distribution
associated with the LM NEF obtained by VF $m/(1-\frac{m}{p})^{r}$.

\begin{proposition}\label{p:lmzero}
The zero-mass probability $P_{r}(0;p,m)$
in the LM class is increasing in $r\geq 0$.
\end{proposition}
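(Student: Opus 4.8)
The plan is to mirror the strategy of Proposition \ref{p:abmzero}, reducing the whole statement to the monotonicity of a single elementary function that has already been analyzed. First I would invoke the EDM scaling relation \eqref{e:phip}, which gives $\phi_{p,r}(m)=p\,\phi_{1,r}(m/p)$, and hence
\[
P_r(0;p,m)=\exp\big(-\phi_{p,r}(m)\big)=\exp\big(-p\,\phi_{1,r}(m/p)\big).
\]
Since $p>0$ is fixed, $P_r(0;p,m)$ is increasing in $r$ exactly when $\phi_{1,r}(m/p)$ is decreasing in $r$. As $m$ runs over the mean domain $(0,p)$, the argument $m/p$ runs over $(0,1)$, so it suffices to prove that $\phi_{1,r}(m)$ is decreasing in $r\geq 0$ for every fixed $m\in(0,1)$.

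Next I would substitute the explicit form of $\phi_{1,r}$ obtained from \eqref{psi 1 LMNS} with $p=1$, namely
\[
\phi_{1,r}(m)=\frac{1}{r+1}\Big(1-(1-m)^{r+1}\Big),
\]
and set $x=1-m\in(0,1)$ together with $s=r+1$. This rewrites $\phi_{1,r}(m)=f_x(s)$, where $f_x(s)=\frac1s(1-x^s)$ is precisely the function introduced in the proof of Proposition \ref{p:abmzero}.

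The key step is then already available. In that earlier proof the function $f_x(s)$ was shown to be strictly decreasing in $s>0$ for every $x\in(0,1)$, by checking that its derivative $\tfrac{d}{ds}f_x(s)$ is negative, which reduces to an elementary one-variable inequality. Consequently $f_x(s)$ is decreasing along the integer values $s=1,2,3,\ldots$ corresponding to $r=0,1,2,\ldots$, so $\phi_{1,r}(m)$ is decreasing in $r$, and $P_r(0;p,m)$ is increasing, as claimed.

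I expect no genuine obstacle here; the heavy lifting was done once and for all in the ABM argument. The one point worth flagging is a pleasant simplification relative to that case. In the ABM class the formula for $\phi_{1,r}$ split into a logarithmic $r=1$ case and a power case $r\geq 2$, which forced the separate base-case comparisons (i) and (ii). Here the LM formula \eqref{psi 1 LMNS} is uniform across all $r\geq 0$; in particular $r=0$ yields $\phi_{1,0}(m)=m$, recovering the Poisson zero-probability $e^{-m}$. This uniformity lets the single application of the monotonicity of $f_x$ cover the entire range $r\geq 0$, including the Poisson base case, so no ad hoc comparisons are needed.
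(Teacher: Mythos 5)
Your proposal is correct and follows essentially the same route as the paper's own proof: reduce to $p=1$ via the EDM scaling $\phi_{p,r}(m)=p\,\phi_{1,r}(m/p)$, recognize $\phi_{1,r}(m)=f_x(s)$ with $x=1-m$ and $s=r+1$, and invoke the monotonicity of $f_x(s)=\frac{1}{s}(1-x^s)$ already established in the proof of Proposition \ref{p:abmzero}. Your closing observation is also accurate: because formula \eqref{psi 1 LMNS} is uniform in $r\geq 0$ (with $r=0$ giving the Poisson case $\phi_{1,0}(m)=m$), no separate base-case comparisons are needed, exactly as in the paper.
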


\begin{proof}
Similar as in the proof of Proposition \ref{p:abmzero}
it suffices to show that $\phi_{1,r}(m)$ is decreasing, where
$\phi_{1,r}(m)$ is given in \eqref{psi 1 LMNS} with $p=1$:
\[
\phi_{1,r}(m) = \frac{1}{r+1}\big(1-(1-m\big)^{r+1}\big),
\quad 0<m<1;\; r=0,1,\ldots.
\]
Define functions $\{f_x(s) : (0,\infty)\to\mathbb{R}, x\in(0,1)\}$ by
\[
f_x(s) = \frac1s(1-x^s).
\]
In the proof of Proposition \ref{p:abmzero} we showed
that $f_x(s)$ is decreasing (in $s>0$)
for any $x\in (0,1)$. Specifically, we get that
$\phi_{1,r}(m)$ is decreasing for $r\geq 0$, and 
$P_{r}(0;p,m)$ is increasing for $r\geq 0$ for any values of 
$p>0$ and $m\in(0,p)$.

\end{proof}

\begin{remark}\label{r:lmcont}
For the LM class we assumed that $r$ is a natural number. However, all
results obtained for this class are also correct for any real number 
$r\geq 1$ as the LM class of VFs can be shown to fulfill the premises of
\citet[Proposition 4.4]{letac1990}. 
Consequently, the finite sum in 
(\ref{psi LMNS}) could be replaced by sum of entire series 
using the binomial series of
Newton instead of the binomial formula of Pascal. 
Note that the proof of Proposition \ref{p:lmzero}
used already any $r\geq 1$.

\noindent
However, we focus in this work only on classes for which we can obtain
relatively simple expressions for both $\psi_p(m)$ and $\phi_p(m)$ in the
form of finite sums and the like and not in sums of entire series. This is
the reason why we have excluded the HD class (see (\ref{HD1})) from further
consideration.
\end{remark}


\section{A Numerical Example}\label{s:num}

In this section we show that our classes are very well suited for fitting
small counting data. Consider the well-known 6 data sets of automobile
insurance claims per policy over a fixed period of time that have been
studied in \citet{gossiaux1981}.
They fitted Poisson (P) and the
negative Binomial distributions (NB),
Since then, many  models have been developed for fitting one or more
of these data sets
\citep{willmot1987,ruohonen1988,denuit1997,kokonendji2004a,
kokonendji2004b,gomez2011a,gomez2011b,
gencturk2016,gomez2016,bhati2019,castellares2020}.
It is not the purpose of this paper to give
a full description of all the 6 data sets, of all these fitting models,
and of a full comparison with our ABM and LM models.
For the complete picture, we refer to the ancillary file
of \citet{barlev2020}. 
Here, we consider the data set
of insurance claims in Zaire in 1974.

\begin{table}[H]
\caption{Data set of insurance claims.}
\label{t:dataset}
\medskip \centering
\begin{tabular}{||l|c|c|c|c|c|c||}
\hline
value & 0 & 1 & 2 & 3 & 4 & 5 \\
\hline
frequency & 3719 & 232 & 38 & 7 & 3 & 1\\
\hline
\end{tabular}
\end{table}

\noindent
The descriptive statistics of the data 
show over-dispersion
(index of dispersion, defined as the variance
divided by the mean, is larger than one),
zero-inflation (fraction of zeros is more than 90\%), 
and relative large skewness and kurtosis.

\begin{table}[H]
\caption{Descriptive statistics.}
\label{t:statistics}
\medskip \centering
\begin{tabular}{||l|l||}
\hline
number of observations & $4000$\\
mean     & $0.0865000$ \\
variance & $0.122548$\\
skewness & $5.31602$ \\
kurtosis & $41.0067$\\
fraction zeros & $0.929750$ \\
index of dispersion & $1.41674$\\
\hline
\end{tabular}
\end{table}

\noindent
The following models and their probability mass functions
have been considered for fitting (next to
the default Poisson and negative binomial).

\begin{enumerate}[(a).]
\item
PIG (Poisson-inverse Gaussian distribution) in
\citet{willmot1987},
\[
p_n = \int_0^\infty \frac{e^{-x}\,x^n}{n!}\, f(x;\beta,\mu)\,dx, 
\; n=0,1,\ldots,
\]
where $f(x;\beta,\mu)$ is the inverse Gaussion pdf of the form
\[
f(x;\beta,\mu) = \frac{\mu}{\sqrt{2\pi\beta x^3}}\,
e^{-\frac{(x-\mu)^2}{2\beta x}},\; x>0,
\]
with positive parameters $\beta$ and $\mu$.

\item
PGD (Poisson-Goncharov distribution) in \citet{denuit1997},
\[
p_n = G_n e^{u_n}, \; n=0,1,\ldots,
\]
where $u_n=\theta_1+\theta_2n\theta_3n^2$ for three real-valued
parameters $\theta,\theta_2,\theta_3$, and where the $G_n$'s are
defined recursively by
\[
G_0=1;\;G_n=-\sum_{i=0}^{n-1}\frac{u_i^{n-i}}{(n-i)!}\,G_i.
\]

\item
DLD (discrete Lindley distribution) in \citet{gomez2011a}, 
\[
p_n = \frac{\lambda^n}{1-\log\lambda}\,
\big(\lambda\log\lambda + (1-\lambda)(1-(n+1)\log\lambda)\big)
\; n=0,1,\ldots,
\]
with parameter $0<\lambda<1$.

\item
NLD (new logarithmic distribution) in \citet{gomez2011b},
\[
p_n = \frac{\log(1-\alpha\theta^n) - \log(1\alpha\theta^{n+1}}%
{\log(1-\alpha)}\; n=0,1,\ldots,
\]
with parameters $\alpha<1$ ($\alpha\neq 0$), and $0<\theta<1$.

\item
PLB (Poisson-Lindley-Beta prime distribution) in \citet{gomez2016},
\[
p_n = \frac{\alpha(1+\alpha)\Gamma(\alpha+\beta)\Gamma(\beta+n)}%
{\Gamma(\beta)\Gamma(\alpha+\beta+n+3)}\,
\big((\beta+n)(2+n) + \alpha+2\big)\; n=0,1,\ldots,
\]
with positive parameters $\alpha,\beta$.

\item
GDP (a new geometric discrete Pareto distribution) in \citet{bhati2019},
\[
p_n= \frac{q^n}{(n+1)^\alpha} - \frac{q^{n+1}}{(n+2)^\alpha},
\; n=0,1,\ldots,
\]
with parameters $0<q\leq 1$ and $\alpha\geq 0$.

\item
BTD (Bell-Touchard discrete distribution) in \citet{castellares2020},
\[
p_n = \frac1{n!}\,e^{\theta(1-e^\alpha)}\, \alpha^n T_n(\theta),
\; n=0,1,\ldots,
\]
with positive parameters $\alpha,\theta$, and where the $T_n(\cdot)$
are Touchard polynomials; i.e,
\[
T_n(\theta) = e^{-\theta}\sum_{k=0}^\infty \frac{k^n\,\theta^k}{k!}
\]
\end{enumerate}

\medskip\noindent
It would be interested to include in our comparison study
the EDM of the Poisson-Tweedie class \citep{kokonendji2004b,jorgensen2016}.
However, we decided to leave out this model because the numerical application
to fitting data is in these papers not given in its generality
but only for the specific case of Poisson-inverse Gaussian distribution.
It is outside the scope of this paper to develop a full numerical
procedure for computing the distributions of the 
Poisson-Tweedie class, which will be exploited in a subsequent study.  

We compare the fitted distributions of the models given above
with the distributions of our ABM and LM classes
for a range of powers $r$. 
As a fair comparison we consider only two-parameter distributions.
The one-parameter discrete Lindley distribution gives a bad fit, 
the three-parameter  Poisson-Goncharov distribution
would give an almost perfect fit.
The performances of the fitted remaining distributions (a), (d)-(g) 
are computed, using the parameters that
are reported in the cited references,
whereas the parameters $p$ and $m$ of our
models are computed by maximum
likelihood estimation.

The computation of the probabilities \eqref{probability of X}
is done by a numerical computer program. Given parameters
$p$ and $m$ of the variance function
$V_p(m)$, the functions $\psi_p$ and $\phi_p$ follow
from Proposition \ref{p:abmfunctions} (ABM class)
and Proposition \ref{p:lmfunctions} (LM class),
the measure $(\mu_p^*)_n$ is computed 
numerically by solving the derivatives in \eqref{LM1}.
For more details on the implementations for the ABM and LM classes
we refer to \cite{barlev2020}. The resulting estimated parameters 
are for $r=1,\ldots,10$

\begin{table}[H]
\caption{ABM and LM Parameters.}
\label{t:parms}
\medskip \centering
\begin{tabular}{||r|ll|ll||}
\hline
 & \multicolumn{2}{c|}{ABM} & \multicolumn{2}{c||}{LM}\\
\multicolumn{1}{||c|}{$r$} & \multicolumn{1}{c}{$m$} & 
\multicolumn{1}{c}{$p$} & \multicolumn{1}{|c}{$m$} & 
\multicolumn{1}{c||}{$p$} \\
\hline
1 & 0.086500 & 0.216600 & 0.086500 & 0.277098\\
2 & 0.086500 & 0.459964 & 0.086500 & 0.520502\\
3 & 0.086500 & 0.704120 & 0.086500 & 0.764666\\
4 & 0.086500 & 0.948471 & 0.086500 & 1.009018\\
5 & 0.086500 & 1.192899 & 0.086500 & 1.253448\\
6 & 0.086500 & 1.437365 & 0.086500 & 1.497914\\
7 & 0.086500 & 1.681853 & 0.086500 & 1.742403\\
8 & 0.086500 & 1.926354 & 0.086500 & 1.986905\\
9 & 0.086500 & 2.170867 & 0.086500 & 2.231417\\
10 & 0.086500 & 2.415385 & 0.086500 & 2.475934\\
\hline
\end{tabular}
\end{table}

\medskip\noindent 
Let $x_0, x_1,\ldots,x_K$ be the data set of counts as presented in
Table \ref{t:dataset}; thus $K=6$, and 
$x_k$ is the observed number of value $k$.
Let $N=\sum_{k=0}^Kx_k$ be the total number of observations. The empirical
probability mass function is 
\begin{equation*}
p^{\mathrm{(emp)}}_k=\frac{x_k}{N},\;k=0,\ldots,K.
\end{equation*}
The performance of a fitting model is expressed through the following
measures.

\begin{itemize}

\item 
$\chi^2$ value; taken into account sufficient expected number in the
categories.

\item $p$-value of the $\chi^2$ quantile; taken into account the number of
parameters that are estimated from the data.

\item Root mean squared error (RMSE): 
\begin{equation*}
\sqrt{\frac{1}{K+1}\sum_{k=0}^K \big(x_k - Np_k^{\mathrm{mod}}\big)^2}.
\end{equation*}

\item
Kullback-Leibler divergence (KL) :
\[
\sum_{k=0}^K p_k^{\mathrm(emp)}\,\log\frac{p_k^{\mathrm(emp)}}{p_k^{\mathrm(mod)}}.
\]
\end{itemize}

\medskip\noindent 

\begin{table}[H]
\caption{Performance measures of the fitting models.
ABM and LM for $r\in\{1,\ldots,10\}$ that gave the 
highest $p$-value.}
\label{t:measures}
\medskip \centering
\begin{tabular}{||l|rrrrr||}
\hline
\multicolumn{1}{|c|}{model} & \multicolumn{1}{c}{$\chi^2$} & 
\multicolumn{1}{c}{df} & \multicolumn{1}{c}{$p$-value} 
& \multicolumn{1}{c}{RMSE} & \multicolumn{1}{c|}{KL} \\ 
\hline
PIG & 0.543789 & 2 & 0.761935 & 1.760396 & 1.9866e-04\\
NLD & 2.312184 & 2 & 0.314714 & 2.235378 & 3.1047e-04\\
PLB & 0.370556 & 2 & 0.830873 & 1.488770 & 2.0715e-04\\
GDP & 0.383445 & 2 & 0.825536 & 1.303073 & 1.7734e-04\\
BTD & 9.251567 & 2 & 0.009796 & 4.056789 & 8.6835e-04 \\
\hline
ABM($r=10$)& 0.444362 & 2 & 0.800770 & 0.726298 & 1.5896e-04\\
LM($r=4$)  & 0.382901 & 2 & 0.825760 & 1.044667 & 1.6972e-04\\
\hline
\end{tabular}
\end{table}

\medskip\noindent 
When we consider the $p$-value criterion, 
several distributions are compatible, including the ABM and LM models.
However, for the RMSE and Kullback-Leibler criteria,
the ABM and LM models show a major improvement. 
In \citet{barlev2020} we give an overview of comparisons of
these models for many more data sets having various different 
statistical properties. The overall picture  is
that our models show competitive, or best
performances in all cases, while the other models 
perform sometimes good sometimes bad. 

\medskip\noindent
It might be of interest to present the performances of
the investigated ABM and LM classes, $r=1,\ldots,10$. We show
these in figures.

\begin{figure}[H]
\centering
\includegraphics[width=0.32\textwidth]{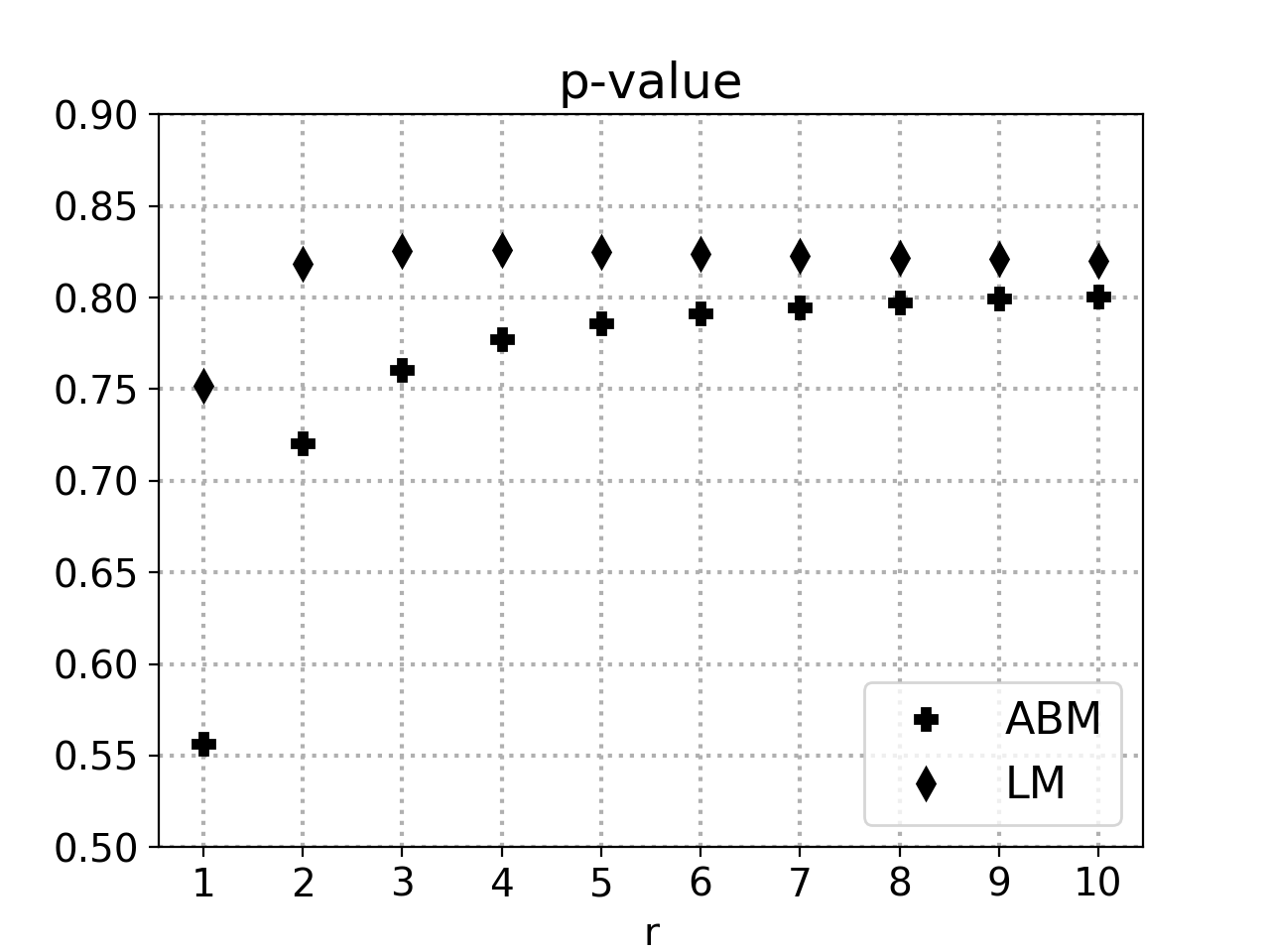}
\includegraphics[width=0.32\textwidth]{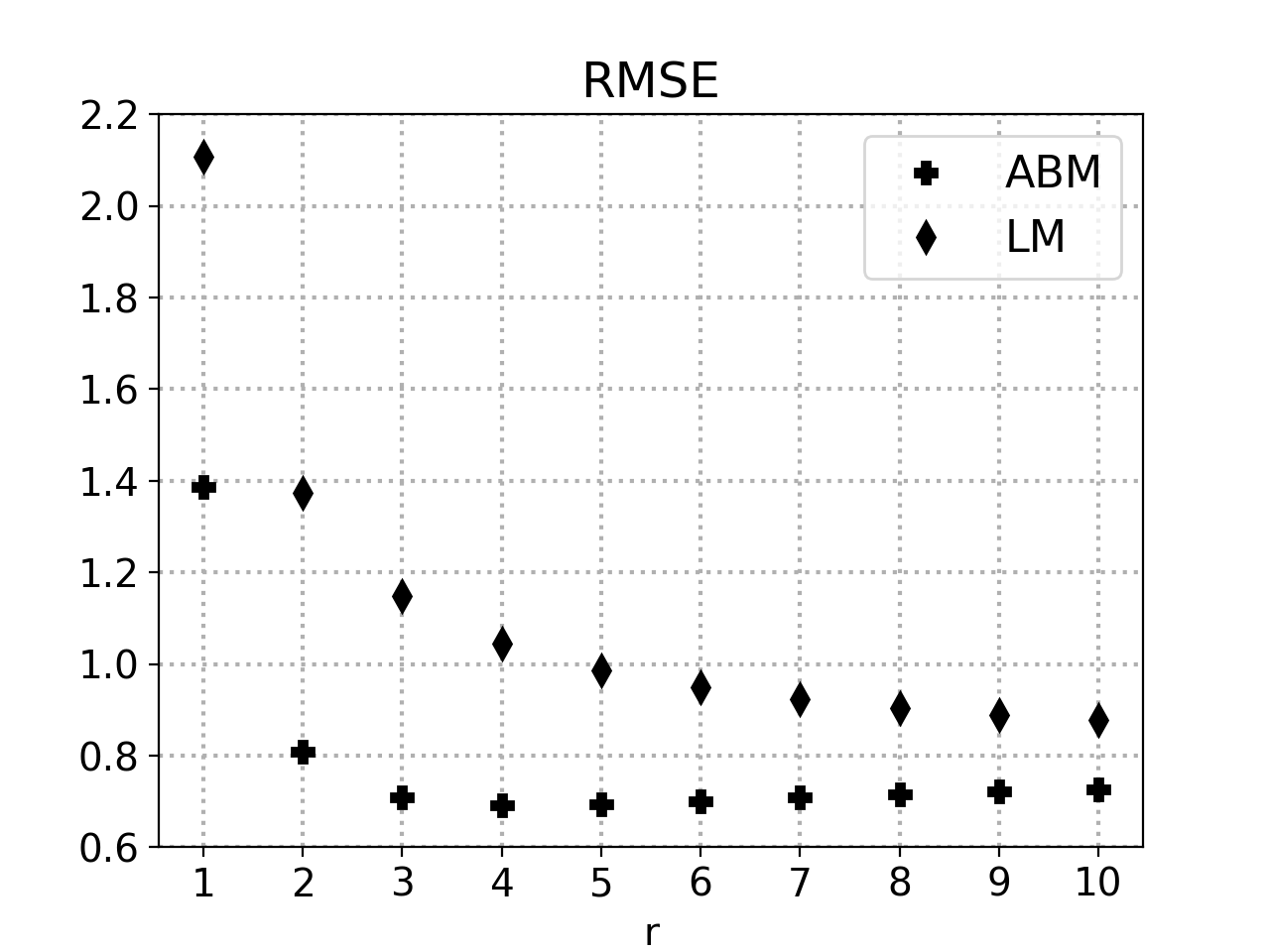}
\includegraphics[width=0.32\textwidth]{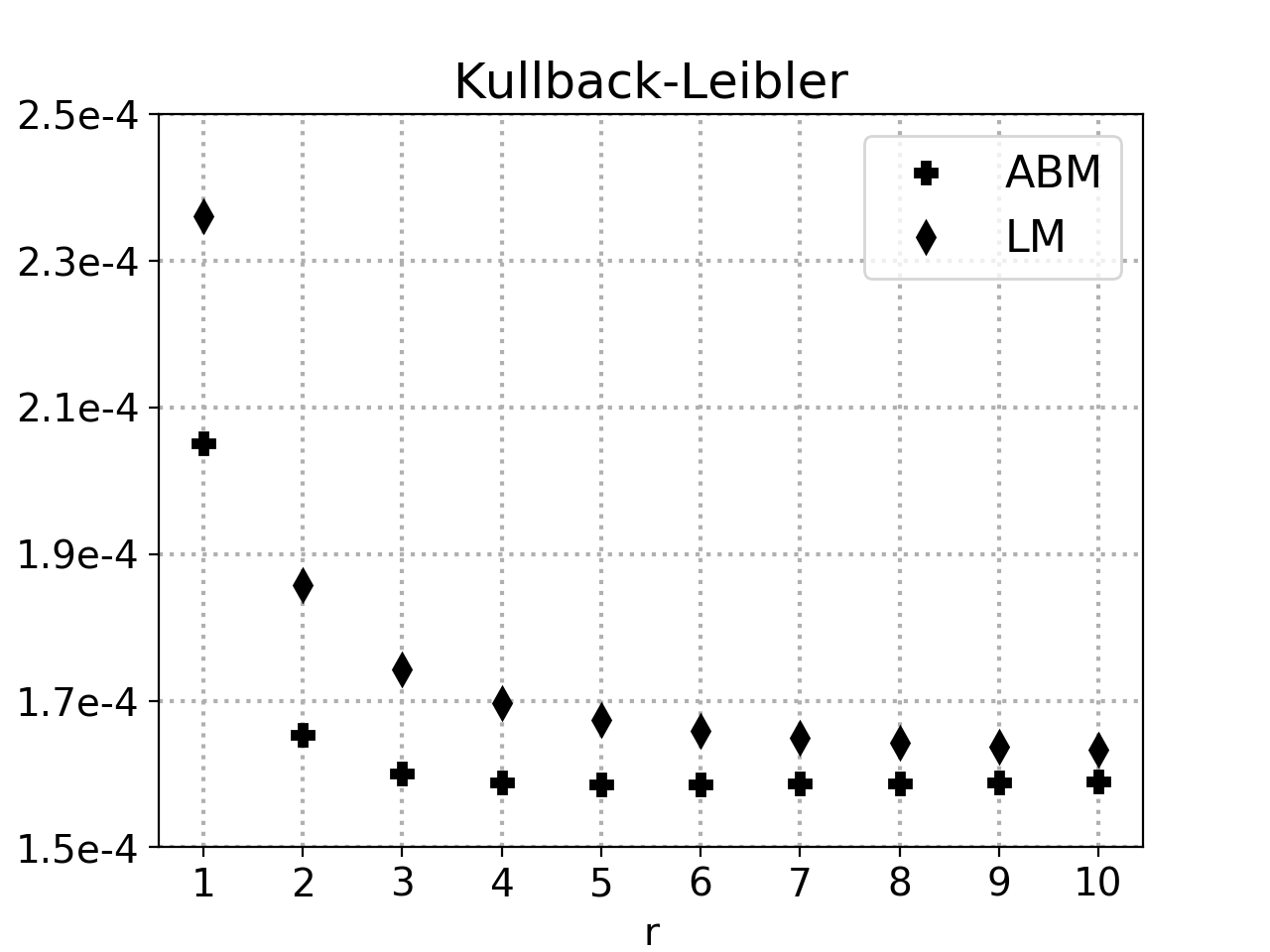}
\caption{Performances of the fitted models of the ABM and LM classes for $r$ 
upto 10.}
\label{f:performancefigs}
\end{figure}

\noindent
From these figures we make a few observations. Firstly, the LM class performs
better than the ABM class for the $p$-value performance criterion, but not
for the other criteria. Secondly, when the degree $r$ of the variance function
increases, the performances of ABM and LM become more and more equal.
Also, note that performance criteria do not alsway show a monotone behaviour.

\section{Concluding Remarks}\label{s:conclusion}

\begin{enumerate}
\item 
In this paper we have attempted in exposing 'new' EDMs of distribution
supported on the set of nonnegative integers. Such EDMs can be represented
only by their mean value parametrization 
whereas their respective generating
measure can be computed via (\ref{LM1}) by existing powerful mathematical
software. The expressions obtained for the $\mu_{n}$'s  will depend, of
course, on the unknown dispersion 
parameter $p$. 
Based on a random sample,
the MLE is the sample mean whereas the parameter $p$ 
can be
estimated by the method of moments estimation. All that is said above
depends, of course, on the ability to locate classes of VFs of the 
form (\ref{LM}) or (\ref{LM2}) for which both the $\psi$ and the $\phi$ 
functions possess explicit
and 'nice expressions in terms of $m$. In such a case the likelihood
function is well expressed, a fact that has a tremendous significance in
statistics. Obviously, if such EDMs are used in a Bayesian framework there
is no need to compute the $\mu_{n}$'s.

\item 
From the presentation of these two classes, it will be easy to see
that more classes of the same type (i.e., subclasses of either 
(\ref{LM}) or (\ref{LM2})) can be constructed. 
However, we will suffice with presenting
only the two classes.

\item 
The classes of EDMs introduced in this paper can be used, for example,
as competitors and alternatives to the Poisson or negative binomial NEFs for
modeling count data in various actuarial aspects and insurance claims. This
has been indeed demonstrated in the numerical section. However, based on our
experience in the insurance and actuarial industry, we have noticed that
professionals are very concerned about using new (both discrete and
continuous) distributions to estimate and evaluate various relevant
parameters as the insurance risk factor. So in another paper of ours
\citep{barlev2019} we considered, just for the sake of
demonstration, the problem of computing the insurance risk factor 
\begin{equation*}
\ell (x)=\mathbb{P}\big(\sum_{k=0}^{N}Y_{k}>x\big),
\end{equation*}%
for large values of $x$, where $N$ is a discrete random variable,
counting the number of claims
during a fixed period of time, and the $Y_{i}$'s are the respective
independent claim sizes. The conventional actuarial literature is full with
models in which $N$ has either Poisson or negative binomial distributions
whereas the $Y_{i}$'s have a common gamma or inverse Gaussian or even
positive stable distribution. \citet{barlev2019} used
'unconventional' NEF distributions for $N$ by taking the Abel, strict
arcsine and Tak\'{a}cs NEFs 
\citep[i.e., NEFs having cubic VFs characterized by][]{letac1990}.
For data of a Swedish claims at a car insurance company they
considered all combinations of the distributions of $N$ and the $Y_{i}$'s
mentioned above and demonstrated that the best fit for such data is obtained
for the pair (arcsine, positive stable) with $p$-value equals $.7460$. All
fit ranking after are, respectively, (arcsine, inverse Gaussian, $p$-value 
$0.4224$), (Tak\'{a}cs, gamma, $p$-value $0.4159$), (Abel, positive stable, 
$p $-value $0.3089$), (Tak\'{a}cs, inverse Gaussian, $p$-value $0.2800$), 
(Tak\'{a}cs, positive stable, $p$-value $0.2701$), 
(Abel, inverse Gaussian, $p$-value $0.2459$) and 
(Abel, gamma, $p$-value $0.2101$). As opposed to these,
the worst fit has been obtained for pairs of the Poisson along with the
gamma, inverse Gaussian and positive stable distributions with $p$-value
less than $.00001$.

\item 
Consequently, we trust that the ABM and and LM classes (as well as other
similar classes) are going to play a significant role as a 'new generation'
of counting distributions and to have a 'prosperous future' in applications
to actuarial science data as well as to other statistical data. Indeed, the
present authors \citep[see][]{barlev2020} conducted a project in which
more than 20 sets of count data from 
the statistical literature were collected. Such
data were modeled by some conventional discrete distributions and were
compared to count probabilities belonging to the ABM and LM classes.
And so, as we expected, all of the latter count probabilities, and with
respect to various of metrics or goodness-of-fit tests, have shown
superiority and provided a much better fit for each of these 
data sets.

\item 
One last remark. Researchers may avoid using the LM class as it is
non-steep. However, another important class of NEFs having power VFs of the
form $(V,M)=(\alpha m^{\gamma},\mathbb{R}^{+}),\alpha >0,\gamma <0$ 
(which belong to the Tweedie scale) is also non-steep. 
Indeed, for the latter class 
$M=\mathbb{R}^{+}$ whereas its convex support $C=\mathbb{R}$. This class
though is frequently used in various applications.
\end{enumerate}

\bigskip\noindent
\textbf{Acknowledgements}. 
The authors are 
indebted to two referees for their careful reading, criticism,
and constructive comments which significantly improved the 
presentation of of the paper. We are also
extremely grateful to G\'{e}rard
Letac for his careful reading of the previous versions of the paper and for
his wise, constructive and helpful comments which resulted in much improved
version. The part of work of Shaul Bar-Lev is partially supported by the
Netherlands Organization for Scientific Research (NWO) project number
040.11.711.

\appendix

\section{LM Miscellaneous}\label{a:lmmisc}
We now present a second way to compute the $\mu _{n}$'s for the NEFs
corresponding to the LM class by means of Hermite polynomials, a way
suggested to us by G\'{e}rard Letac (a personal communication).

\begin{proposition}\label{p:numeasure}
Let $\mathcal{F}$ be the NEF corresponding to the VF 
$m/(1-\frac{m}{p})^{r}, 0<m<p$. 
Then there exists a positive measure $\nu $ on the set of
positive integers $\mathbb{N}$ such that $\mathcal{F}$ is generated by
\begin{equation*}
\mu =e^{p\nu }=\delta _{0}+\sum\nolimits_{k=1}^{\infty }
\frac{p^{k}}{k!}\nu^{\ast k},
\end{equation*}
where $\nu^{\ast k}$ is the $k$-th fold convolution of $\nu $,
\begin{equation*}
\nu(n)=\frac{1}{n!n}\left[ 
\left( \frac{d}{dm}\right)^{n-1}e^{nP(m)}\right]_{m=0}
\end{equation*}
and
\begin{equation*}
P(m)=-\sum_{k=1}^{\infty }\frac{(-r)_{k}}{k!k}m^{k},
\end{equation*}
where $(-r)_{k}$ is the Pochhammer symbol
\begin{equation*}
(-r)_{k}=-r(-r+1)(-r+2)\cdot \cdot \cdot (-r+k-1).
\end{equation*}
\end{proposition}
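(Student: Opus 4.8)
The plan is to reduce everything to the base member of the exponential dispersion model (the case $p=1$), to identify $\nu$ as the associated Lévy (jump) measure, and then to extract its masses by a single Lagrange inversion. First I would record that the family $\{\mathcal{F}_p\}$ is an EDM, so that $L_{\mu_p}=L_{\mu_1}^{\,p}$, equivalently $k_{\mu_p}=p\,k_{\mu_1}$. An infinitely divisible distribution concentrated on $\mathbb{N}_0$ admits a compound-Poisson representation: there is a positive measure $\nu$ on $\mathbb{N}$ with $\mu_1=e^{\nu}$, i.e. $L_{\mu_1}=\exp(L_\nu)$ and hence $L_{\mu_p}=\exp(p\,L_\nu)$. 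This is exactly $\mu_p=e^{p\nu}=\delta_0+\sum_{k\ge1}\frac{p^k}{k!}\nu^{*k}$, so it remains only to pin down $\nu(n)$. The positivity of $\nu$ is precisely the statement that the LM members are infinitely divisible, which is already granted for this class, so I would cite that rather than reprove it.

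The key identity links $\nu$ to the mean and, happily, does not use the detailed form of $V$. Since $L_\nu(\theta)=k_{\mu_1}(\theta)=\phi_1(m)$ with $\theta=\psi_1(m)$ and $z:=e^{\theta}$, writing $L_\nu(\theta)=\sum_{n\ge1}\nu(n)z^n$ and differentiating in $\theta$ gives $\sum_{n\ge1}n\,\nu(n)z^n=k_{\mu_1}'(\theta)=m$, because $k_{\mu_1}'(\theta)$ is by definition the mean. Thus $n\,\nu(n)$ is simply the coefficient of $z^n$ in the inverse function $m=m(z)$, and the whole problem collapses to inverting the map $z\mapsto m$.

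It then remains to carry out that inversion. From Proposition \ref{p:lmfunctions} (with $p=1$) together with the Pochhammer identity $(-r)_k/k!=(-1)^k\binom{r}{k}$, one checks that $\psi_1(m)=\log m-P(m)$, whence $z=m\,e^{-P(m)}$, that is $m=z\,\Phi(m)$ with $\Phi(m)=e^{P(m)}$ and $\Phi(0)=1$. This is a Lagrange equation, and the Lagrange--Bürmann formula applied to $F(m)=m$ yields
\[
[z^n]\,m=\tfrac1n\,[m^{n-1}]\,e^{nP(m)}=\frac{1}{n!}\Big[\big(\tfrac{d}{dm}\big)^{n-1}e^{nP(m)}\Big]_{m=0}.
\]
Since $n\,\nu(n)=[z^n]\,m$, this gives $\nu(n)=\frac{1}{n\cdot n!}\big[(\tfrac{d}{dm})^{n-1}e^{nP(m)}\big]_{m=0}$, the asserted formula.

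The main obstacle is conceptual rather than computational: recognizing that $\nu$ is the Lévy measure and that $\sum_{n\ge1}n\,\nu(n)z^n=m$ reduces the statement to one Lagrange inversion. The one point genuinely requiring care is the positivity of $\nu$; I would secure it by appealing to the infinite divisibility of the LM class (equivalently, by the standard characterization of infinitely divisible laws on $\mathbb{Z}_+$ as compound Poisson with a positive jump measure), rather than trying to read positivity off the derivative formula directly. I would also note that for real $r\ge1$ the series $P(m)$ does not truncate, but $\Phi(m)=e^{P(m)}$ is still an analytic power series with $\Phi(0)=1$, so the Lagrange inversion step is unaffected.
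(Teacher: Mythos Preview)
Your argument is correct and follows essentially the same route as the paper: reduce to $p=1$, obtain $\theta=\log m-P(m)$ (equivalently $m=z\,e^{P(m)}$ with $z=e^{\theta}$), apply the Lagrange inversion formula to expand $m=k_{\mu_1}'(\theta)$ as a power series in $z$, and then integrate to read off $k_{\mu_1}(\theta)=\sum_{n\ge1}\nu(n)z^n$. The only difference is organizational: you make the compound-Poisson identification $k_{\mu_1}=L_\nu$ explicit at the outset and invoke infinite divisibility for the positivity of $\nu$, whereas the paper carries out the Lagrange computation first and dismisses the compound-Poisson step as ``standard.''
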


\begin{proof}
It sufficers to consider $p=1$.
For this case we have
\begin{equation*}
d\theta =\frac{dm}{V_{\mathcal{F}}(m)}=(1-m)^{r}\frac{dm}{m}=\frac{dm}{m}
+ \sum_{k=1}^{\infty }\frac{(-r)_{k}}{k!}m^{k-1}dm
\end{equation*}
Thus $\theta =\log m-P(m)$ which by denoting $w=e^{\theta }$ we get 
$m=we^{P(m)}$. 
Now apply the Lagrange formula which states that if 
$h(w)=wg(h(w))$ then
\begin{equation*}
h(w)=\sum_{n=1}^{\infty }\frac{w^{n}}{n!}\left[ 
(\frac{d}{dm})^{n-1}(g(m))^{n}\right]_{m=0}.
\end{equation*}
When applying this formula to $m=h(w)=k_{\mu }^{\prime }(\theta )$ and 
$g(m)=e^{P(m)}$ we get
\begin{equation*}
k_{\mu }^{\prime }(\theta )=\sum\nolimits_{n=1}^{\infty }\frac{w^{n}}{n!}
\left[ (\frac{d}{dm})^{n-1}e^{nP(m)}\right] _{m=0}.
\end{equation*}
Since $d\theta =dw/w$ we obtain
\begin{equation*}
k_{\mu }(\theta )=\sum\nolimits_{n=1}^{\infty }\frac{w^{n}}{n!n}
\left[ (\frac{d}{dm})^{n-1}e^{nP(m)}\right] _{m=0}
=\sum_{n=1}^{\infty }\nu(n)w^{n},
\end{equation*}
and the remainder of the proof is standard.
\end{proof}

\begin{example}
For $r=1$, $P(m)=m$ and $\nu (n)=n^{n-2}/n!.$
\end{example}

\begin{example}
For $r=2$, $P(m)=2m-m^{2}/2$ but the computation of%
\begin{equation*}
\left[ (\frac{d}{dm})^{n-1}e^{n(2m-m^{2}/2)}\right] _{m=0}
\end{equation*}
is more delicate. For such a computation with use the formula for Hermite
polynomials \citep[see][p.130]{rainville1960},
by which
\begin{equation*}
e^{2xt-t^{2}}=\sum_{k=0}^{\infty }H_{k}(x)\frac{t^{k}}{k!}.
\end{equation*}
Setting $x=\sqrt{2n}$ and $t=\sqrt{n/2}m$ yields
\begin{equation*}
e^{n\left( 2m-m^{2}/2\right) }
=\sum_{k=0}^{\infty }H_{k}(\sqrt{2n})
\left( \frac{n}{2}\right) ^{k/2}\frac{m^{k}}{k!}.
\end{equation*}
By employing the Taylor formula it follows that
\begin{equation*}
\left[ (\frac{d}{dm})^{n-1}e^{n(2m-m^{2}/2)}\right]_{m=0}
=H_{n-1}(\sqrt{2n})\left( \frac{n}{2}\right) ^{(n-1)/2}
\end{equation*}
and thus
\begin{equation*}
\nu (n)=\frac{1}{n!n}H_{n-1}(\sqrt{2n})\left( \frac{n}{2}\right) ^{(n-1)/2}.
\end{equation*}
\end{example}

\end{document}